\def\re{\text{Re}}
\def\ma{\mathfrak{a}}
\def\r{\right}
\let\ve=\varepsilon
\let\ol=\overline
\let\vp=\varphi
\let\wt=\widetilde
\let\wh=\widehat
\theoremstyle{definition}
\newtheorem{definition}{Definition}[section]
\newtheorem*{remark}{Remark}
\newtheorem*{notation}{Notation}
\theoremstyle{plain}
\newtheorem{theorem}{Theorem}
\newtheorem{lemma}[theorem]{Lemma}
\newtheorem{conjecture}[theorem]{Conjecture}
\numberwithin{equation}{section}
\numberwithin{theorem}{section}
\def\qed{\ifhmode\textqed\fi
   \ifmmode\ifinner\quad\qedsymbol\else\dispqed\fi\fi}
\def\textqed{\unskip\nobreak\penalty50
    \hskip2em\hbox{}\nobreak\hfil\qedsymbol
    \parfillskip=0pt \finalhyphendemerits=0}
\def\dispqed{\rlap{\qquad\qedsymbol}}
\begin{document}
\title{T\MakeLowercase{he fourth moment of }D\MakeLowercase{irichlet} $L$-\MakeLowercase{functions along the critical line}}
\author{Xiaosheng Wu}
\date{}
\address {School of Mathematics, Hefei University of Technology, Hefei 230009,
P. R. China.}
\email {xswu@amss.ac.cn}
\thanks{}
\subjclass[2010]{11M06 }
\keywords{fourth moment; Dirichlet $L$-function; divisor problem; power saving; }

\begin{abstract}
For a positive integer $q\not\equiv 2 \pmod 4$, this work considers the fourth moment of Dirichlet $L$-functions averaged over both $t\in [0,T]$ and primitive characters to modulus $q$. An asymptotic formula with a power saving from both $q$-aspect and $t$-aspect in the error term is obtained.
\end{abstract}
\maketitle

\section{Introduction}

Moments of families of $L$-functions have a wide range of applications, and their computation is counted as a central problem in number theory, which may go back to Hardy and Littlewood \cite{HL16}. If we define
\begin{align}
  M_k(T)=\int_0^T|\zeta(\tfrac12+it)|^{2k}dt,\notag
\end{align}
Hardy and Littlewood proved that $M_1(T)\sim T\log T$, and then Ingham (see \cite{Tit86}; Chapter VII) showed the fourth moment to be $M_2(T)\sim \frac1{2\pi^2}T(\log T)^4$. In general, it is conjectured that
\begin{align}\label{conMk}
  M_{k}(T)\sim C_kT(\log T)^{k^2},
\end{align}
for some constant $C_k$, whose precise value was predicted by Keating and Snaith \cite{KS00} by analogies with random matrix theory. Although higher moments have not yet been computed, Soundararajan \cite{Sou09} obtained almost sharp upper bounds on GRH that $M_k(T)\ll T(\log T)^{k^2+\ve}$, and the $\ve$ on the power of $\log T$ was then removed by Harper \cite{Har13}.

Conrey, Farmer, Keating, Rubinstein and Snaith \cite{CFK+05} refined the conjecture \eqref{conMk}, and predicted that
\begin{align}
  M_k(T)=TP_{k^2}(\log T)+O(T^{\frac12+\ve}),\notag
\end{align}
where $P_{k^2}$ is a polynomial of degree $k^2$.  For $k=2$, the asymptotic formula has already been proved by Heath-Brown \cite{HB79} except for the strength of the error.
More precisely, Heath-Brown \cite{HB79} proved that
\begin{align}
  \int_0^T|\zeta(\tfrac12+it)|^4dt=TP_4(\log T)+O\left(T^{\frac78+\ve}\r).\notag
\end{align}
To deduce all main terms as well as a power saving error term is a significant challenge, and it requires a difficult analysis on off-diagonal terms to distinguish lower-order main terms. Some deep estimates on the divisor problem
\begin{align}
  \sum_{n\le x}d(n)d(n+f)\notag
\end{align}
were explored to obtain the power saving in \cite{HB79}. Further progresses on the fourth moment of the Riemann zeta-function were based on methods originating in the spectral theory of automorphic forms, in particular the Kuznetsov formula. Then, Zavorotnyi \cite{Zav89} improved the result to
\begin{align}\label{eqZav}
  \int_0^T|\zeta(\tfrac12+it)|^4dt=TP_4(\log T)+O\left(T^{\frac23+\ve}\r).
\end{align}
Motohashi established a beautiful explicit formula for a smoothed version of the fourth moment of the Riemann zeta-function in terms of the cubes of the central values of certain automorphic $L$-functions (to see Theorem 4.2 of \cite{Mot97}).  Based on this explicit formula, Ivi\'c and Motohashi \cite{IM95} were able to replace the factor $T^\ve$ in \eqref{eqZav} by a suitable power of $\log T$, and this is the best estimate to date. A generalization of Motohashi's formula to the fourth moment of Dirichlet $L$-functions weighted by a non-archimedean test function has also been obtained by Blomer, Humphries, Khan, and Milinovichet \cite{BHK+20}, which proceeds differently with some important applications.

To some extent, the fourth moment averaging over $t$ for an individual Diriclet $L$-function is a direct extension of the problem from the Riemann zeta-function.
Recently, Topacogullari \cite{Top19} considered this moment and proved that
\begin{align}\label{eqTop}
  \int_1^T|L(\tfrac12+it, \chi)|^4=\int_1^TP_{\chi}(\log t)dt+O\left(q^{2-3\theta}T^{\frac12+\theta+\ve}+qT^{\frac23+\ve}\r),
\end{align}
where $P_{\chi}$ is a polynomial of degree $4$ with coefficients depending on $q$, and where $\theta=7/{64}$ is the current best-known bound on the size of the Hecke eigenvalue of a Maass form, due to Kim and Sarnak \cite{Kim03}. With $\theta=7/{64}$, this asymptotic formula is non-trivial in the range $q\ll T^{{25}/{107}-\ve}$.

The fourth moment of Dirichlet $L$-functions at the central value
\[
\frac1{\varphi^*(q)}\mathop{\sum\nolimits^*}_{\chi ~(\bmod q)}\left|L\left(\tfrac12,\chi\r)\r|^4
\]
has gotten a lot of attention. Here, the sum is over all primitive characters modulo $q$, and $\varphi^*(q)$ is the number of these primitive characters. Due to a conjecture for the moments of unitary style in \cite{CFK+05}, it is predicted that
\begin{conjecture}\label{conjq}
For any $q\not\equiv 2 \pmod 4$, we have
\begin{align}
\frac1{\varphi^*(q)}\mathop{\sum\nolimits^*}_{\chi ~(\bmod q)}\left|L\left(\tfrac12,\chi\r)\r|^4= \prod_{p\mid q}\frac{(1-p^{-1})^3}{(1+p^{-1})}P_4(\log q)+O\left(q^{-\frac12+\ve}\r),\notag
\end{align}
where $P_4$ is a computable absolute polynomial of degree 4.
\end{conjecture}

 It was first proved by Heath-Brown \cite{HB81} that
\begin{align}\label{HBq}
\frac1{\varphi^*(q)}\mathop{\sum\nolimits^*}_{\chi ~(\bmod q)}\left|L\left(\tfrac12,\chi\r)\r|^4=\frac1{2\pi^2}\prod_{p\mid q}\frac{(1-p^{-1})^3}{(1+p^{-1})}(\log q)^4+O\left(2^{\omega(q)}\frac{q}{\varphi^*(q)}(\log q)^3\r),
\end{align}
where $\omega(q)$ means the number of distinct prime factors of $q$. This asymptotic formula is non-trivial if $\omega(q)$ is not too large. Then Soundararajan \cite{Sou07} filled in this exception with a sharper error term, so the leading term of the asymptotic formula was proved completely.

In 2011, Young \cite{You11} made an important breakthrough and proved the asymptotic formula for prime moduli that
\begin{align}\label{Youngformula}
\frac1{\varphi^*(p)}\mathop{\sum\nolimits^*}_{\chi(\bmod p)}|L(\tfrac12,\chi)|^4=P_4(\log p)+O\left(p^{-\frac1{80}+\frac{\theta}{40}+\ve}\r).
\end{align}
Then Blomer, Fouvry, Kowalski, Michel and Mili\'cevi\'c \cite{BFK+17a,BFK+17b} improved on the error term in \eqref{Youngformula} to $p^{-1/20}$, with an average result of Hecke eigenvalues to remove $\theta$, as well as some new results on bilinear forms in Kloosterman sums; see also Fouvry, Kowalski, and Michel \cite{FKM14},  Kowalski, Michel, and Sawin \cite{KMS17}, and Shparlinski and Zhang \cite{SZ16}.

By distinguishing the main terms in a special divisor sum function of type
\[
\mathcal{D}_{q}\left(s,\lambda,\frac h{l},r\r)=\sum_{\substack{(n,q)=1\\ (n+r,q)=1}}\frac{\sigma_\lambda(n)} {n^s} e\left(n\frac h{l}\r)
\]
with $\sigma_{\lambda}(n)=\sum_{d\mid n}d^\lambda$, the author \cite{Wu20} succeed in deducing the asymptotic formula for general moduli.
It is proved in \cite{Wu20} that, for any integer $q\nequiv 2 \pmod 4$,
\begin{align}\label{eqmain}
\frac1{\varphi^*(q)}\mathop{\sum\nolimits^*}_{\chi(\bmod q)}|L(\tfrac12,\chi)|^4=\prod_{p\mid q}\frac{(1-p^{-1})^3}{(1+p^{-1})}P_4(\log q)+O\left(q^{-\frac1{14}+\frac37\theta+\ve}\r).
\end{align}
In \eqref{eqmain}, there is also a considerable improvement on the error term, as a special case, it sharpens the error term to $p^{-1/14}$ for prime moduli. This is due to an application of a recent progress on bilinear forms in Kloosterman sums by Kerr, Shparlinski, Wu, and Xi \cite{KSWX22}.

Actually, the fourth moment of Dirichlet $L$-functions, including both $q$-aspect and $t$-aspect, was the first to draw attention to, which
may go back to Montgomery \cite{Mon71}, who proved that
\begin{align}
\mathop{\sum\nolimits^*}_{\chi ~(\bmod q)}\int_0^T\left|L\left(\tfrac12+it,\chi\r)\r|^4dt\ll\varphi(q)T(\log qT)^4.\notag
\end{align}
For easy of notation, we will apply
\[
T_1=T+1
\]
in place of $T$ in the error term, avoiding the case $T\rightarrow 0$.
According to the conjecture in \cite{CFK+05}, we may predict that
\begin{conjecture}\label{conjT}
For any positive integer $q\not\equiv 2 \pmod 4$ and $T>0$, there exist computable constants $c_0$, $c_1$, $c_2$, $c_3$, $c_4$ that
\begin{align}
\frac1{\varphi^*(q)}&\mathop{{\sum}^*}_{\chi~(\bmod q)}\int_0^T\left|L\left(\tfrac12+it,\chi\right)\right|^4dt=\prod_{p\mid q}\frac{(1-p^{-1})^3}{(1+p^{-1})}\notag\\
&\times\sum_{j=0}^4c_j \int_0^T\frac12\sum_{\ma=0,1} \left(\log\frac{q}{\pi} +\frac12 \frac{\Gamma'}{\Gamma}\left(\tfrac{\frac12-it+\ma}2\r) +\frac12\frac{\Gamma'}{\Gamma}\left(\tfrac{\frac12+it+\ma}2\r)\r)^j dt+O\left(T_1^{\frac12+\ve}q^{-\frac12+\ve}\r).\notag
\end{align}
\end{conjecture}

Conjecture \ref{conjT} looks a little different from Conjecture \ref{conjq} since a polynomial seems gone. By Stirling's approximation
\[
\frac{\Gamma'}{\Gamma}\left(\tfrac{\frac12\pm it+\ma}2\r)=\log\frac{t}{2}+O\left(\frac1t\r),
\]
it is easy to see that the main terms of the conjecture would evolve to
\[
T\prod_{p\mid q}\frac{(1-p^{-1})^3}{(1+p^{-1})}P_4\left(\log Tq\r)
\]
for large $T$. However, this should not be applied for small $T$ since the error in using Stirling's approximation would be large.

The leading term of the asymptotic formula has already been obtained by Rane \cite{Ran81} for some of $q$ and large $T$, he proved that
\begin{align}\label{eqRan}
\frac1{\varphi^*(q)}\mathop{\sum\nolimits^*}_{\chi ~(\bmod q)}&\int_T^{2T}\left|L\left(\tfrac12+it,\chi\r)\r|^4dt\\
&=\frac{T}{2\pi^2}\prod_{p\mid q}\frac{(1-p^{-1})^3}{(1+p^{-1})}(\log qT)^4+O\left(2^{\omega(q)}T(\log qT)^3(\log\log3q)^5\r).\notag
\end{align}
In 2010, Bui and Heath-Brown \cite{BH10} proved the leading term for all $q\not\equiv 2 \pmod 4$ and $T\ge2$ by sharpening the error term in \eqref{eqRan}. To be specific, they proved that, for $q\not\equiv 2 \pmod 4$ and $T\ge 2$,
\begin{align}
&\frac1{\varphi^*(q)}\mathop{\sum\nolimits^*}_{\chi ~(\bmod q)}\int_0^T\left|L\left(\tfrac12+it,\chi\r)\r|^4dt\notag\\
&\ \ =\left(1+O\left(\frac{\omega(q)}{\log q}\sqrt{\frac{q}{\varphi(q)}}\r)\r)\frac{T}{2\pi^2}\prod_{p\mid q}\frac{(1-p^{-1})^3}{(1+p^{-1})}(\log qT)^4+O\left(\frac{q}{\varphi^*(q)}T(\log qT)^{\frac72}\r).\notag
\end{align}

After applying the approximate functional equation (see also Lemma \ref{lemafe}), the leading term comes from the diagonal terms. But to distinguish other main terms, one should deduce an asymptotic formula for the off-diagonal terms. By extending the method of Heath-Brown \cite{HB79}, Wang \cite{Wan88} tried to distinguish all main terms, proving that
\begin{align}
  \frac1{\varphi^*(q)}\mathop{\sum\nolimits^*}_{\chi ~(\bmod q)}\int_0^T&\left|L\left(\tfrac12+it,\chi\r)\r|^4dt=T\sum_{j=0}^4a_j(\log qT)^j+O\left( \frac{q}{\varphi^*(q)}\min\left\{q^{\frac18}T^{\frac78+\ve},T^{\frac{11}{12}+\ve}\r\}\r)\notag
\end{align}
with
\[
a_4=\frac1{2\pi^2}\prod_{p\mid q}\frac{(1-p^{-1})^{3}}{(1+p^{-1})}, \ \ \ \ \ \ \ \ a_j\ll q^{\ve}\ \ \ \text{for} \ \ \ j=0,1,2,3.
\]
 This asymptotic formula is non-trivial only for large $T\gg q^{1+\ve}$, and it is hardly to distinguish an explicit dependence of the coefficients $a_j$ on $q$, for $0\le j\le 3$.

This work is devoted to deducing the asymptotic formula in Conjecture \ref{conjT}, with an error term owning a powering saving from $q$-aspect and $t$-aspect simultaneously, so that the asymptotic formula would hold uniformly in all $T$ and $q$.
\begin{theorem}\label{thmmain}
We have that Conjecture \ref{conjT} holds but with an error term $\mathscr{E}(T,q)$, bounded uniformly in $T$ and $q$ that
\begin{align}\label{eqthmmainE}
\mathscr{E}(T,q)\ll T_1^{1-\frac{1-6\theta}{382-96\theta}+\ve}q^{-\frac{1-6\theta}{382-96\theta}+\ve}.
\end{align}
Moreover, we have
\[
\mathscr{E}(T,q)\ll T_1^{1+\ve}q^\ve\Delta,
\]
where we may take $\Delta$ freely among
\begin{align}\label{Delta1}
T_1^{\frac{11}7}q^{-\frac1{14}+\frac37\theta}\ \ \ \  \text{and}\ \ \ \ T_1^{-\frac1{16}}.
\end{align}
\end{theorem}

The bound in \eqref{eqthmmainE} is just a special form to gain the same power saving from both $t$-aspect and $q$-aspect, and it is obvious a direct result of \eqref{Delta1},
For prime moduli, we can have a much better bound on the error term.
\begin{theorem}\label{thmmainp}
For prime $p\ge3$, we have that Conjecture \ref{conjT} holds but with an error term $\mathscr{E}(T,p)$, bounded uniformly in $T$ and $p$ that
\begin{align}\label{eqthmmainEp}
\mathscr{E}(T,p)\ll T_1^{1-\frac1{60}+\ve}p^{-\frac{1}{60}+\ve}.
\end{align}
Moreover, we have
\[
\mathscr{E}(T,p)\ll T_1^{1+\ve}p^\ve\Delta_1,
\]
where we may take $\Delta_1$ freely among
\begin{align}\label{Delta2}
T_1^{\frac{11}7}p^{-\frac1{14}}\ \ \ \  \text{and}\ \ \ \ \max\left\{T_1^{-\frac12},~T_1^{-\frac1{16}}p^{-\frac1{16}}\right\}.
\end{align}
\end{theorem}

When $T$ is small, considering the moment with a smooth function on $t$ could make the average essentially easy, as well as a considerable power saving from $q$-aspect in the error term. However, this hardly has any application here. Actually, the cost of last removing the smooth function will be too large to reserve any saving from $q$-aspect if $T$ is small with respect to $q$. A more feasible way is to extend the treatment at the central point by regarding $t$ as a parameter. Since the power saving from $q$-aspect is small at the central point, there is little room for expenditure in $q$-aspect when we treat the large $T$ case. That is to say, to cover all the range, it is important to gain power saving from $t$-aspect while costing nothing in $q$-aspect.

\subsection{Sketch of the proof of Theorems \ref{thmmain} and \ref{thmmainp}}

We split the averaging over $t$ into two parts, according  to the size of $t$ that $t\le q^{\ve_0}$ and $t>q^{\ve_0}$ for some small $\ve_0>0$, and handle them  with different methods.

For small $t$, the cost of applying a smooth function is large, and it is hardly to expect any remarkable power saving from the averaging over $t$. Thus, we treat the integrand directly, and pay our main attention to the saving from $q$-aspect. Thinking of $t$ as a parameter, we may extend the treatment at the central point in \cite{Wu20} to get the following result.
\begin{theorem}\label{corLq}
For $q\not\equiv 2 \pmod 4$ and $0\le t\asymp T$, we have
\begin{align}\label{eqsmallt}
\frac1{\varphi^*(q)}&\mathop{{\sum}^*}_{\chi~(\bmod q)}\left|L\left(\tfrac12+it,\chi\right)\right|^4=\prod_{p\mid q}\frac{(1-p^{-1})^3}{(1+p^{-1})}\\
&\times\sum_{j=0}^4\frac{c_j}2\sum_{\ma=0,1} \left(\log\frac{q}{\pi} +\frac12 \frac{\Gamma'}{\Gamma}\left(\tfrac{\frac12-it+\ma}2\r) +\frac12\frac{\Gamma'}{\Gamma}\left(\tfrac{\frac12+it+\ma}2\r)\r)^j +O\left(T_1^{\frac{11}7+\ve} q^{-\frac1{14}+\frac37\theta+\ve}\r),\notag
\end{align}
where $\theta$ denotes the exponent towards the Ramanujan--Petersson conjecture.
\end{theorem}

The error term in \eqref{eqsmallt} is non-trivial for $T\ll q^{\frac18-\frac34\theta}$, but it is weaker than the main terms only for $T\ll q^{\frac1{22}-\frac3{11}\theta}$. The best known value of $\theta$ is $7/64$, proved by  Kim and Sarnak~\cite{Kim03}. Thanks to Blomer, Fouvry,  Kowalski, Michel and Mili{\'c}evi{\'c}~\cite{BFK+17a,BFK+17b}, we may remove the dependence on the
Ramanujan-Petersson conjecture and take $\theta=0$ for prime moduli .

When $t$ is large, we appeal to a weighted function $\Phi(t)$ to force $m$ and $n$ to be close to each other.  After some technical treatments, we may transform the problem essentially to a quadratic divisor problem
\begin{align}\label{eqqdp}
\frac1{\varphi^*(q)}\sum_{d\mid q}\vp(d)\mu\left(\frac qd\r)\sum_{\substack{m_1m_2-n_1n_2=\pm h\neq0 \\ (m_1m_2n_1n_2,q)=1 \\ d\mid h}} F\left(\frac hH,\frac{m_1}{M_1},\frac{m_2}{M_2}, \frac{n_1}{N_1},\frac{n_2}{N_2}\r)
\end{align}
for a compact support function $F$. A divisor problem as in \eqref{eqqdp} but without the coprime condition $(m_1m_2n_1n_2,q)=1$ has been well studied. By the delta method, Duke, Friedlander, and Iwaniec \cite{DFI94} provided an asymptotic formula for a remarkable range of $h$.
Bettin, Bui, Li, and Radziwi\l\l \cite{BBLR16} introduced a different way to treat the divisor problem, which works specially for small $h$ and provides a sharp error term. We extend the way to adapt the coprime condition. We will obtain an asymptotic formula for \eqref{eqqdp} with a remarkable power saving from $t$-aspect but not costing $q$-aspect.
The treatment would also borrow some technologies from Young \cite{You10} and Bettin, Chandee and Radziwi\l\l \cite{BCR17}, etc.

\begin{theorem}\label{thmmainT}
For $q\not\equiv 2 \pmod 4$ and $T\gg q^\ve$, we have
\begin{align}\label{eqT2T}
\frac1{\varphi^*(q)}&\mathop{{\sum}^*}_{\chi~(\bmod q)}\int_{T}^{2T}\left|L\left(\tfrac12+it,\chi\right)\right|^4dt\\
=&\prod_{p\mid q}\frac{(1-p^{-1})^3}{(1+p^{-1})}\sum_{j=0}^4c_j \int_T^{2T}\left(\log\frac{tq}{2\pi}\right)^jdt+O\left(\max\left\{T^{\frac{15}{16}}(q/q_0^2)^{-\frac1{16}}, T^{\frac12}\right\}T^\ve q^\ve\r),\notag
\end{align}
where $q_0=\max\{d: d\mid q^*, d<{q^*}^{\frac12}\}$ with $q^*=\prod_{p\mid q}p$.
\end{theorem}

The error term in \eqref{eqT2T} provides a power saving not less than $T^{-\frac 1{16}+\ve}q^\ve$ for all moduli, in particular, a power saving $T^{-\frac 1{16}+\ve}q^{-\frac 1{16}+\ve}$ for prime moduli with large $T$.

Theorems \ref{thmmain} and \ref{thmmainp} are direct results of Theorems \ref{corLq} and \ref{thmmainT}, and Theorems \ref{corLq} and \ref{thmmainT} are based on two shifted moments in next section.

\subsection{Two shifted moments}
Let $\Phi(t)$ be a smooth, nonnegative function with support contained in $[T/2,4T]$, satisfying $\Phi^{(j)}(t)\ll_j T_0^{-j}$ for all $j=0,1,2,\ldots$, where $T^{\frac12+\ve}\ll T_0\ll T$. We have chosen to compute two shifted fourth moments of Dirichlet $L$-functions, which include the parameters $\alpha,~\beta,~\gamma,~\delta$ or a weighted function $\Phi(t)$, and doing so allows for a clearer structure of the main terms. The first one is given by
\begin{align}
M(\alpha,\beta,\gamma,&\delta,t)=\frac1{\varphi^*(q)}\mathop{{\sum}^*}_{\chi~(\bmod q)}\notag\\
&\times L\left(\tfrac12+it+\alpha,\chi\r) L\left(\tfrac12+it+\beta,\chi\r)L\left(\tfrac12-it+\gamma,\ol{\chi}\r) L\left(\tfrac12-it+\delta,\ol{\chi}\r).\notag
\end{align}
This moment does not contain the averaging over $t$, and we consider its asymptotic formula when $t$ is small.
The second shifted moment is defined via
\begin{align}
M(\alpha,\beta,\gamma,&\delta,\Phi)=\frac1{\varphi^*(q)}\mathop{{\sum}^*}_{\chi~(\bmod q)}\int_{\mathbb{R}}\Phi(t)\notag\\
&\times L\left(\tfrac12+it+\alpha,\chi\r) L\left(\tfrac12+it+\beta,\chi\r)L\left(\tfrac12-it+\gamma,\ol{\chi}\r) L\left(\tfrac12-it+\delta,\ol{\chi}\r)dt.\notag
\end{align}

To present asymptotic formulae for these two shifted moments, we should introduce some notations for convenience. Let
\begin{align}\label{eqdefZ}
Z_{q}(\alpha,\beta,\gamma,\delta)=\frac{\zeta_q(1+\alpha+\gamma) \zeta_q(1+\alpha+\delta)\zeta_q(1+\beta+\gamma)\zeta_q(1+\beta+\delta)} {\zeta_q(2+\alpha+\beta+\gamma+\delta)},
\end{align}
\begin{align}
X_{\alpha,\gamma}(q,t,\ma)=\left(\frac q{\pi}\r)^{-\alpha-\gamma} \frac{\Gamma\left(\frac{\frac12-\alpha-it+\ma}2\r)} {\Gamma\left(\frac{\frac12+\alpha+it+\ma}2\r)} \frac{\Gamma\left(\frac{\frac12-\gamma+it+\ma}2\r)} {\Gamma\left(\frac{\frac12+\gamma-it+\ma}2\r)},
\end{align}
and
\begin{align}\label{eqX}
X_{\alpha,\beta,\gamma,\delta}(q,t,\ma)=X_{\alpha,\gamma}(q,t,\ma)X_{\beta,\delta}(q,t,\ma)
\end{align}
with $\ma=0,1$.
Obviously, $Z_{q}(\alpha,\beta,\gamma,\delta)$ and $X_{\alpha,\beta,\gamma,\delta}(q,t,\ma)$ are symmetric with respect to the parameters $\alpha, \beta$ and also symmetric with respect to $\gamma, \delta$.

\begin{theorem}\label{thmLq}
For $q\not\equiv 2 \pmod 4$, $t\asymp T\ge 0$, and $\alpha, \beta, \gamma, \delta\ll(\log T_1q)^{-1}$, we have
\begin{align}
M(\alpha,\beta,\gamma,\delta, &t)=Z_{q}(\alpha,\beta,\gamma,\delta) +Z_{q}(-\gamma,-\delta,-\alpha,-\beta) \left(\frac12\sum_{\ma=0,1}X_{\alpha,\beta,\gamma,\delta}(q,t,\ma)\right)\notag\\
&+Z_{q}(\beta,-\gamma,\delta,-\alpha)\left(\frac12\sum_{\ma=0,1}X_{\alpha,\gamma}(q,t,\ma)\right) +Z_{q}(\alpha,-\gamma,\delta,-\beta)\left(\frac12\sum_{\ma=0,1}X_{\beta,\gamma}(q,t,\ma)\right)\notag\\
&+Z_{q}(\beta,-\delta,\gamma,-\alpha) \left(\frac12\sum_{\ma=0,1}X_{\alpha,\delta}(q,t,\ma)\right) +Z_{q}(\alpha,-\delta,\gamma,-\beta)\left(\frac12\sum_{\ma=0,1}X_{\beta,\delta}(q,t,\ma)\right)\notag\\
& +O\left(T_1^{\frac{11}7+\ve} q^{-\frac1{14}+\frac37\theta+\ve}\r).\notag
\end{align}
\end{theorem}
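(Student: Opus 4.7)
The plan is to follow the overall architecture of Young \cite{You11}, inserting the sheaf-theoretic refinements of \cite{BFK+17a,BFK+17b} to push the saving on the off-diagonal from $q^{-5/512}$ up to $q^{-1/20}$, while carefully threading the $t$-dependence through every step. First I would apply an approximate functional equation to the product of four $L$-values. Because each $\Lambda(s,\chi)$ has a unimodular root number, the AFE expresses $M(\alpha,\beta,\gamma,\delta,t)$ as a sum of six pieces corresponding to the six ways of ``swapping'' subsets of $\{\alpha,\beta\}$ with subsets of $\{\gamma,\delta\}$ via the functional equation; these pieces match term-for-term the six main terms in the statement, and the gamma ratios $X_{\alpha,\gamma}(q,t,\ma)$ arise as the conductor-swap factors, with the averaging over $\ma\in\{0,1\}$ coming from treating even and odd characters uniformly. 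Each piece has the shape
\[
\frac{1}{\phi^*(q)}\mathop{{\sum}^*}_{\chi~(\bmod q)}\sum_{m_1,m_2,n_1,n_2}\frac{\chi(m_1m_2)\overline{\chi}(n_1n_2)}{(m_1m_2)^{1/2+it+\cdots}(n_1n_2)^{1/2-it+\cdots}}\,W\!\left(\frac{m_1m_2n_1n_2}{q^2},t\right),
\]
with $W$ a smooth weight depending mildly on the shift parameters and on $t$.

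Next I would apply orthogonality of primitive characters, which for prime $q$ localizes the sum to $m_1m_2 \equiv n_1n_2 \pmod q$ with $(m_1m_2n_1n_2,q)=1$, up to negligible contributions from the principal and imprimitive characters. The diagonal $m_1m_2=n_1n_2$ is then evaluated by standard Mellin-Barnes and Dirichlet-series manipulations: isolating the Euler product of the resulting generating series at each prime produces precisely the factor $Z_q(\alpha,\beta,\gamma,\delta)$ defined in \eqref{eqdefZ}, while the five functional-equation-swap pieces deliver the other five $Z_q$ terms with the appropriately permuted arguments.

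The main obstacle is the off-diagonal $m_1m_2 - n_1n_2 = hq \neq 0$, which is the quadratic divisor problem alluded to in \eqref{eqqdp}. I would follow Young by opening the divisor functions, using reciprocity to transfer the congruence mod $q$ into an additive character, and applying Poisson summation to reduce matters to bilinear sums involving complete exponential sums mod $q$ of Kloosterman type. The improvement over \cite{You11} comes from invoking the trace-function and $\ell$-adic bilinear-form estimates of \cite{FKM14,KMS17,BFK+17a,BFK+17b} together with the smooth Shparlinski--Zhang input \cite{SZ16}, which deliver the saving $q^{-1/20}$ in the principal range; the weaker exponent $q^{-1/18}$ in the second error term covers the complementary balancing regime where one has to fall back on the Weil bound directly. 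The delicate new point is that the phases $(m_1m_2)^{it}$ and the $t$-dependent gamma weights survive every manipulation and must be analyzed by stationary phase inside the Mellin-Barnes integrals; I would show that each such step costs no more than $T^{O(1)}$, so that the polynomial losses $T^{2+\ve}$ and $T^{8/3+\ve}$ are absorbed by the $q$-savings under the hypothesis $T \ll q^{1/48-\ve}$.
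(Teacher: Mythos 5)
Your proposal contains a structural misconception that would cause the argument to collapse at the main-term stage. The approximate functional equation for a product of four Dirichlet $L$-functions produces exactly \emph{two} pieces (the sum and its ``dual'' under the functional equation), as in Lemma~\ref{lemafe}; it does not produce six. The ``six swaps'' appearing in the final answer are the CFKRS recipe, which is a heuristic bookkeeping device and not a decomposition one can write down and then treat each piece separately. In the actual proof, after applying orthogonality only the first two main terms $Z_q(\alpha,\beta,\gamma,\delta)$ and $Z_q(-\gamma,-\delta,-\alpha,-\beta)\cdot\frac12\sum_\ma X_{\alpha,\beta,\gamma,\delta}(q,t,\ma)$ come from diagonal $m=n$ contributions. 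The remaining four $Z_q$ terms are extracted from the \emph{off-diagonal} $m\equiv\pm n\pmod q$, $m\neq n$: one opens a divisor function via the Ramanujan-sum expansion (Lemma~\ref{lemafed}), separates variables by Mellin transforms, and meets the Estermann $D$-function; shifting contours past its two poles produces residues that, after a lengthy reassembly involving a gamma-function identity like~\eqref{eqGamma}, become the four ``swap'' main terms. By asserting that the off-diagonal is purely error, you have no mechanism to generate those four terms, so your final formula would simply be wrong.

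A second serious omission is the complete absence of the Kuznetsov formula and the spectral theory of Maass forms. After the Estermann functional equation, the error terms $E_{M,N}$ in the adjacent range $M\asymp N$ are sums of Kloosterman sums over the modulus $l$ (not just over $q$), and the only known way to get a power saving there is the Kuznetsov decomposition into Maass, Eisenstein, and holomorphic contributions, together with the spectral large sieve; this is exactly Propositions in Sect.~\ref{secerrorterm}. Your bilinear trace-function estimates from \cite{SZ16,FKM14,BFK+17a,BFK+17b} apply only to the \emph{remote} off-diagonal regime where $N/M$ is large (Sect.~\ref{secrod}), after a Voronoi summation step brings the Kloosterman sums down to modulus $q$. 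Without the spectral input your error bound in the balanced range degrades to something far weaker than $q^{-1/20}$. Finally, the statement that the $t$-dependence is handled by stationary phase is off: the paper tracks $t$ by Stirling's approximation inside the Mellin--Barnes integrals (e.g.\ the bounds \eqref{equbH-2}, \eqref{equbH-1}), not by stationary phase, and the thresholds $T_1^{2}$ and $T_1^{8/3}$ drop out of that analysis combined with the exponent bookkeeping in Sect.~\ref{secrod} rather than from an absorption argument.
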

This theorem can be seen as an extension of Theorem 1.3 in \cite{Wu20}.
By regarding $t$ as a parameter, we may deduce Theorem \ref{thmLq} following the treatment of \cite[Theorem 1.3]{Wu20} step by step. Differences will come from the ratio of gamma factors, as well as the factor $\left(\frac mn\r)^{-it}$ in the asymptotic functional equation (see Lemma \ref{lemafe}). These differences will not bring any essential changes in calculating the main terms, and will contribute at most a factor $T_1^{\frac{11}7+\ve}$ to the error term, which we will specify in Section \ref{secthmlq}.

\begin{theorem}\label{thmLT}
For $q\neq 2 \pmod 4$, $T\gg q^\ve$, and $\alpha, \beta, \gamma, \delta\ll(\log T_1q)^{-1}$, we have
\begin{align}
M(\alpha,\beta,\gamma,\delta,\Phi)&=Z_{q}(\alpha,\beta,\gamma,\delta) \int_R \Phi(t)dt +Z_{q}(-\gamma,-\delta,-\alpha,-\beta) \int_R \Phi(t)\left(\frac{tq}{2\pi}\r)^{-\alpha-\beta-\gamma-\delta}dt\notag\\
&+Z_{q}(\beta,-\gamma,\delta,-\alpha) \int_R \Phi(t)\left(\frac{tq}{2\pi}\r)^{-\alpha-\gamma}dt +Z_{q}(\alpha,-\gamma,\delta,-\beta) \int_R \Phi(t)\left(\frac{tq}{2\pi}\r)^{-\beta-\gamma}dt\notag\\
&+Z_{q}(\beta,-\delta,\gamma,-\alpha) \int_R \Phi(t)\left(\frac{tq}{2\pi}\r)^{-\alpha-\delta}dt +Z_{q}(\alpha,-\delta,\gamma,-\beta) \int_R \Phi(t)\left(\frac{tq}{2\pi}\r)^{-\beta-\delta}dt\notag\\
&+{O\left( T^{\frac34+\ve}(q/q_0^2)^{-\frac14+\ve}(T/T_0)^3+T^\ve q^\ve(T/T_0)\r)},\notag
\end{align}
where $q_0=\max\{d: d\mid q^*, d<{q^*}^{\frac12}\}$ with $q^*=\prod_{p\mid q}p$.
\end{theorem}

\subsection{Proof of Theorem \ref{corLq} and Theorem \ref{thmmainT} from the shifted moments}
For main terms of the asymptotic formulae in Theorems \ref{thmLq} and \ref{thmLT}, the symmetry implies that all poles cancel out to form the holomorphy with respect to the shift parameters, which has been proved in a more general setting in Lemma 2.5.5 of \cite{CFK+05}. Thus, taking the limit as all shifts go to zero in Theorem \ref{thmLq} gives Theorem \ref{corLq}.

The proof of Theorem \ref{thmmainT} needs some narrative, but it is standard. Actually, we would obtain Theorem \ref{thmmainT}
by taking appropriate weighted functions $\Phi(t)$ in Theorem \ref{thmLT}.
Let $0\le\Phi_1(t)\le1$ be a weighted function supported on $[T, 2T]$, which is identical to unity when $T+T_0^{1+\ve}\le t\le 2T-T_0^{1+\ve}$; let $0\le\Phi_2(t)\le 1$ be supported on $[T-T_0^{1+\ve},2T+T_0^{1+\ve}]$, which is identical to unity when $T\le t\le 2T$.
It is obvious that
\begin{align}\label{eqintT}
M(0,0,0,0,\Phi_1)\le\frac1{\varphi^*(q)}\mathop{{\sum}^*}_{\chi~(\bmod q)}\int_{T}^{2T}\left|L\left(\tfrac12+it,\chi\right)\right|^4dt\le M(0,0,0,0,\Phi_2).
\end{align}
On the other hand, taking the limit as all shifts go to zero in Theorem \ref{thmLT} shows that, for $i=1,2$,
\begin{align}
M(0,0,0,0,\Phi_i)=\prod_{p\mid q}\frac{(1-p^{-1})^3}{(1+p^{-1})}&\sum_{j=0}^4c_j \int_T^{2T}\left(\log\frac{tq}{2\pi}\right)^j dt\notag\\
&+O\left( T^{\frac34+\ve}(q/q_0^2)^{-\frac14+\ve}(T/T_0)^3+T_0^{1+\ve}q^\ve\r).\notag
\end{align}
Inserting this into \eqref{eqintT} and taking $T_0=\max\left\{T^{\frac{15}{16}}(q/q_0^2)^{-\frac1{16}}, T^{\frac12}\right\}$, we would establish Theorem \ref{thmmainT}.

The remainder is devoted to proving Theorems \ref{thmLq} and \ref{thmLT}, where we may impose some restrictions on the shifts.
More precisely, we assume that each of the shifts lies in a fixed annulus with inner and outer radii $\asymp(\log T_1q)^{-1}$, which are separated enough so that $|\alpha\pm\beta|\gg(\log T_1q)^{-1}$, etc. We initially prove the theorems with these restrictions in place. Since every terms in the asymptotic formulae are holomorphic, the maximum modulus principle extends our results to all shifts $\ll(\log T_1q)^{-1}$.

\begin{notation}
We use the common convention that $\ve$ denotes an arbitrarily small positive constant which may vary from line to line, and that notations $(a,b)$, $[a,b]$ are the gcd and lcm of $a$ and $b$ respectively. The notation $\sigma_{\alpha,\beta}(n)$ is defined via
\[
\sigma_{\alpha,\beta}=\sum_{d_1d_2=n}d_1^\alpha d_2^\beta.
\]
\end{notation}

\section{Background and auxiliary lemmas}

\subsection{Dirichlet $L$-functions}
Let $q$ be a positive integer and $\chi$ be a primitive character modulo $q$. The Dirichlet $L$-function $L(s,\chi)$ is defined as
\begin{align}
  L(s,\chi)=\sum_n\chi(n)n^{-s}\notag
\end{align}
for $\re(s)>1$. Let
\begin{align}
\ma=\left\{
\begin{aligned}
&0,\ \ \text{for} \ \ \chi(-1)=1,\\
&1,\ \ \text{for} \ \ \chi(-1)=-1,\notag
\end{aligned}
\right.
\end{align}
and let
\begin{align}
  \Lambda(s,\chi)=\left(\frac q{\pi}\r)^{\frac s2}\Gamma\left(\frac{s+\ma}{2}\r)L(s,\chi).\notag
\end{align}
After extended to the whole plane, the Dirichlet $L$-function satisfies the following functional equation
\begin{align}
  \Lambda(s,\chi)=i^{-\ma}q^{-\frac12}\tau(\chi)\Lambda(1-s,\ol{\chi}) \ \ \ \ \text{with}\ \ \ \tau(\chi)=\sum_{n~(\bmod{q})}\chi(n)e\left(\frac nq\right).
\end{align}

\subsection{Approximate functional equation}
\begin{lemma}[Approximate functional equation]
\label{lemafe}
Let $G(s)$ be an even entire function of exponential decay in any strip $|\re(s)|<C$, satisfying $G(0)=1$. For $x>0$ and $\ma=0,1$, we define
\begin{align}
\label{defV}
V_{\alpha,\beta,\gamma,\delta}(x,t,\ma)=\frac1{2\pi i}\int_{(1)}\frac{G(s)}{s}g_{\alpha,\beta,\gamma,\delta}(s,t,\ma)x^{-s}ds,
\end{align}
where
\begin{align}
\label{defg}
g_{\alpha,\beta,\gamma,\delta}(s,t,\ma)=\pi^{-2s}\frac{\Gamma\left(\frac{ \frac12+\alpha+s+it+\ma}2\r) \Gamma\left(\frac{ \frac12+\beta+s+it+\ma}2\r) \Gamma\left(\frac{ \frac12+\gamma+s-it+\ma}2\r) \Gamma\left(\frac{ \frac12+\delta+s-it+\ma}2\r)}{\Gamma\left(\frac{\frac12+\alpha+it+\ma}2\r) \Gamma\left(\frac{\frac12+\beta+it+\ma}2\r) \Gamma\left(\frac{\frac12+\gamma-it+\ma}2\r) \Gamma\left(\frac{\frac12+\delta-it+\ma}2\r)}.
\end{align}
Furthermore, let
\begin{align}\label{eqdefwtV}
\widetilde{V}_{\alpha,\beta,\gamma,\delta}(x,t,\ma)= X_{-\gamma,-\delta,-\alpha,-\beta}(q,t,\ma) V_{\alpha,\beta,\gamma,\delta}(x,t,\ma)
\end{align}
with $X_{-\gamma,-\delta,-\alpha,-\beta}(q,t,\ma)$ being defined as in \eqref{eqX}.
Then, for $\chi(-1)=(-1)^\ma$, we have
\begin{align}
L&\left(\tfrac12+it+\alpha,\chi\r) L\left(\tfrac12+it+\beta,\chi\r) L\left(\tfrac12-it+\gamma,\ol{\chi}\r) L\left(\tfrac12-it+\delta,\ol{\chi}\r)\\
&=\sum_{m,n}\frac{\sigma_{\alpha,\beta}(m)\sigma_{\gamma,\delta}(n) \chi(m)\ol{\chi}(n)} {(mn)^{\frac12}}\left(\frac mn\r)^{-it}V_{\alpha,\beta,\gamma,\delta}\left(\frac{mn}{q^2},t,\ma\r)\notag\\ &\ \ \ + \sum_{m,n}\frac{\sigma_{-\gamma,-\delta}(m)\sigma_{-\alpha,-\beta}(n) \chi(m)\ol{\chi}(n)} {(mn)^{\frac12}}\left(\frac mn\r)^{-it}\wt{V}_{-\gamma,-\delta,-\alpha,-\beta}\left(\frac{mn}{q^2},t,\ma\r).\notag
\end{align}
\end{lemma}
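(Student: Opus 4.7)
I would prove this by the standard contour-shift approach to approximate functional equations. Setting $\Pi(s) := L(\tf12+s+\alpha+it,\chi) L(\tf12+s+\beta+it,\chi) L(\tf12+s+\gamma-it,\ol\chi) L(\tf12+s+\delta-it,\ol\chi)$, I consider
\begin{align*}
I := \frac{1}{2\pi i}\int_{(2)}\frac{G(s)}{s}\,g_{\alpha,\beta,\gamma,\delta}(s,t,\ma)\,q^{2s}\,\Pi(s)\,ds
\end{align*}
and evaluate it in two different ways. On $\re(s)=2$ the four Dirichlet series converge absolutely; interchanging summation and integration extracts the divisor-type coefficients $\chi(m)\sigma_{\alpha,\beta}(m)\ol\chi(n)\sigma_{\gamma,\delta}(n)$ paired with the inner integral $V_{\alpha,\beta,\gamma,\delta}(mn/q^2,t,\ma)$ via the Mellin definition \eqref{defV}, identifying $I$ with the first sum in the claimed AFE.

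Alternatively, I shift the contour to $\re(s)=-2$. Under the standing assumption that the shifts are separated from zero, the only pole crossed is the simple pole of $1/s$ at $s=0$; since $g_{\alpha,\beta,\gamma,\delta}(0,t,\ma)=1$ by inspection of \eqref{defg} and $G(0)=1$ by normalization, the residue equals $\Pi(0)$, the left-hand side of the AFE. On the shifted line I apply the functional equation to each of the four $L$-factors. The four root numbers combine via $|\tau(\chi)|^2=q$, $\tau(\chi)\tau(\ol\chi)=(-1)^\ma q$, and $i^{-4\ma}=1$ to yield the scalar $1$, while the four gamma ratios arising from the functional equations cancel the numerator of $g_{\alpha,\beta,\gamma,\delta}$. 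Substituting $s\mapsto-s$ (using $G(-s)=G(s)$) to return to the line $\re(s)=2$, the surviving powers of $q/\pi$ together with the leftover gamma quotients reorganize into $g_{-\gamma,-\delta,-\alpha,-\beta}(s,t,\ma)\cdot X_{\alpha,\beta,\gamma,\delta}(q,t,\ma)\cdot q^{2s}$, as can be checked by direct comparison with the definitions of $g$ and $X$. Expanding the four transformed $L$-functions as Dirichlet series yields coefficients $\chi(m)\sigma_{-\gamma,-\delta}(m)\ol\chi(n)\sigma_{-\alpha,-\beta}(n)$; the inner integral is $V_{-\gamma,-\delta,-\alpha,-\beta}(mn/q^2,t,\ma)$, which together with the prefactor $X_{\alpha,\beta,\gamma,\delta}$ becomes $\wt V_{-\gamma,-\delta,-\alpha,-\beta}(mn/q^2,t,\ma)$ via \eqref{eqdefwtV}. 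The overall minus sign coming from $\tf{G(-s)}{-s}=-\tf{G(s)}{s}$ makes the shifted-contour evaluation equal to the negative of the second sum. Equating the two evaluations of $I$ then gives $\Pi(0)=\text{(first sum)}+\text{(second sum)}$, as required.

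The main obstacle is the symbolic bookkeeping in the second evaluation: carefully tracking the eight gamma factors, four powers of $q/\pi$, four root numbers, and the contour-orientation sign, and verifying that they regroup into precisely the combination $g_{-\gamma,-\delta,-\alpha,-\beta}\cdot X_{\alpha,\beta,\gamma,\delta}$ dictated by $\wt V$. This is routine but delicate; the symmetries of $g$ and $X$ in the shift parameters, together with the $\chi\leftrightarrow\ol\chi$ pairing in $\Pi$, conspire so that all factors align correctly.
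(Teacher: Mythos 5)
Your proposal is correct and follows essentially the same standard contour-shift argument that the paper invokes by reference to \cite{HY08} and Proposition 2.4 of \cite{You11} (with the shift substitutions $\alpha\to\alpha+it$, $\beta\to\beta+it$, $\gamma\to\gamma-it$, $\delta\to\delta-it$): define the weighted integral $I$, evaluate it directly to get the first sum, shift to $\re(s)=-2$ picking up $\Pi(0)$ at $s=0$, apply the four functional equations, and change $s\to -s$. One small correction to your justification of the contour shift: the fact that only $s=0$ is crossed is not because the shifts are separated from zero, but because $g_{\alpha,\beta,\gamma,\delta}(s,t,\ma)\,q^{2s}\,\Pi(s)$ is, up to a constant independent of $s$, the completed product $\Lambda(\tfrac12+s+\alpha+it,\chi)\Lambda(\tfrac12+s+\beta+it,\chi)\Lambda(\tfrac12+s+\gamma-it,\ol\chi)\Lambda(\tfrac12+s+\delta-it,\ol\chi)$, which is entire — the poles of the gamma quotients in $g$ are exactly cancelled by the trivial zeros of the corresponding $L$-factors in $\Pi$.
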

%Note that $g_{\alpha,\beta,\gamma,\delta}(s,t,\ma)$ and $V_{\alpha,\beta,\gamma,\delta}(x,t,\ma)$ are symmetric with respect to the parameters $\alpha, \beta$ and also symmetric with respect to $\gamma, \delta$.

This approximate functional equation can be deduced standardly from the function equation of $L(s,\chi)$; see also \cite{HY08} and \cite[Proposition 2.4]{You11}. The approximate functional equation holds for a general $G$, and we will appeal to a special one.

\begin{definition}[Definition of $G(s)$]\label{DefG}
Let $G(s)=P_{\alpha,\beta,\gamma,\delta}(s)\exp(s^2)$, where $P_{\alpha,\beta,\gamma,\delta}(s)$ is a even polynomial in $s$ satisfying the following common properties: it takes the value $1$ at $s=0$; it is rational in the shifts $\alpha,\beta,\gamma,\delta$; it is symmetric in the shifts; it is invariant under $\alpha\rightarrow-\alpha$, $\beta\rightarrow-\beta$, etc.; it also takes zero at $s=-\frac{\alpha+\gamma}2$ (as well as other points by symmetry).
\end{definition}
\subsection{Results due to Stirling's approximation}
We present some results about the ratios of gamma functions arising in the approximate functional equation.
\begin{lemma}
\label{lemV}
For $t$ large, we have
\begin{align}\label{lemVX}
X_{\alpha,\beta,\gamma,\delta}(q,t,\ma)=\left(\frac {tq}{2\pi}\r)^{-\alpha-\beta-\gamma-\delta}\left(1+O(t^{-1})\r),
\end{align}
and for $j\ge0$,
\begin{align}\label{lemVX1}
\frac{\partial^j}{\partial t^j}X_{\alpha,\beta,\gamma,\delta}(q,t,\ma)\ll_{j}t^{-j}.
\end{align}
\end{lemma}

\begin{lemma}\label{lemDV}
For $t$ large and $s$ in any fixed vertical strip, we have
\begin{align}\label{lemVg}
g_{\alpha,\beta,\gamma,\delta}(s,t,\ma)=\left(\frac t{2\pi}\r)^{2s} \left(1+O\left(t^{-1}(1+|s|^2)\r)\r).
\end{align}
Moreover, we have
\begin{align}\label{lemVV}
t^j\frac{\partial^j}{\partial t^j}V_{\alpha,\beta,\gamma,\delta}(x,t,\ma)\ll_{A,j}\left(1+|x|/t^2\r)^{-A}
\end{align}
for any fixed $A>0$ and $j\ge0$.
\end{lemma}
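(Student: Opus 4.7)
The plan is to prove both assertions by Stirling's asymptotic expansion combined with a Mellin--Barnes contour shift in the defining integral for $V_{\alpha,\beta,\gamma,\delta}$.

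For \eqref{lemVg}, I would apply the standard consequence of Stirling
$$\frac{\Gamma(z+w)}{\Gamma(z)}=z^{w}\bigl(1+O((1+|w|^{2})/|z|)\bigr),$$
valid uniformly for $w$ in a fixed compact set and $|z|\to\infty$ in the region $|\arg z|\le\pi-\eta$, to each of the four Gamma quotients constituting $g_{\alpha,\beta,\gamma,\delta}(s,t,\ma)$. With $z=\tfrac12\bigl(\tfrac12+\alpha+it+\ma\bigr)$ and $w=s/2$ the $\alpha$-quotient contributes $(it/2)^{s/2}\bigl(1+O(t^{-1}(1+|s|^{2}))\bigr)$; the $\beta$-quotient contributes the analogous expression, while the $\gamma$- and $\delta$-quotients contribute $(-it/2)^{s/2}$ with matching error. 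Multiplying the four main terms telescopes to $((it/2)(-it/2))^{s}=(t/2)^{2s}$, and combining with the prefactor $\pi^{-2s}$ gives $(t/(2\pi))^{2s}$. The four Stirling errors are consolidated into the single bound $O(t^{-1}(1+|s|^{2}))$ since $s$ lies in a fixed vertical strip, which is exactly \eqref{lemVg}.

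For \eqref{lemVV} I would start from the defining Mellin integral for $V$ and differentiate $j$ times under the integral sign; by \eqref{lemVg} each $t$-derivative of $g_{\alpha,\beta,\gamma,\delta}$ costs a factor $t^{-1}$ at the price of a polynomial factor in $|s|$, and the latter is absorbed by the exponential decay of $G(s)=P_{\alpha,\beta,\gamma,\delta}(s)\exp(s^{2})$ on vertical lines. In the regime $|x|/t^{2}\le 1$ the claim reduces to $t^{j}\partial_{t}^{j}V\ll1$: I would shift the line $\re(s)=1$ leftward to $\re(s)=-\tfrac12$, pick up the residue $G(0)/1=1$ at the simple pole $s=0$, and bound the shifted integral using $|g(s,t,\ma)x^{-s}|\ll(t^{2}/x)^{-1/2}\le 1$. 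In the complementary regime $|x|/t^{2}>1$ I would shift to $\re(s)=A$, so that $|g(s,t,\ma)x^{-s}|\ll(t^{2}/x)^{A}$ and the $\exp(s^{2})$ factor keeps the integral absolutely convergent; this yields $(|x|/t^{2})^{-A}$. Combining the two ranges gives the stated bound $\ll(1+|x|/t^{2})^{-A}$.

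The main technical point, and the only place one must be careful, is that the contour shifts cross no unintended singularities. The only pole of $G(s)/s$ in the relevant region is the simple pole at $s=0$ (the polynomial $P$ introduces no poles), while the poles of the numerator gammas in $g_{\alpha,\beta,\gamma,\delta}(s,t,\ma)$ are located at half-integer translates of $\mp it-\ma/2-\{\alpha,\beta,\gamma,\delta\}$; since the shifts are $\ll(\log T_{1}q)^{-1}$ these poles lie at distance $\asymp t$ from any fixed vertical strip once $t$ is large, so the leftward and rightward shifts above are both permissible uniformly in $t$.
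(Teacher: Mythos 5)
Your argument follows the standard Stirling-based route, which is precisely what the paper invokes --- in fact the paper states both estimates without proof, remarking only that ``these two lemmas are well-known results, which can be deduced from Stirling's approximation standardly.'' Your ratio asymptotic $\Gamma(z+w)/\Gamma(z)=z^{w}(1+O((1+|w|^{2})/|z|))$, the telescoping of the four Gamma quotients to $(t/2)^{2s}$, and the two contour shifts (leftward to $\re(s)=-\tfrac12$ in the regime $|x|\le t^2$, rightward to $\re(s)=A$ otherwise) are all sound, and together give a complete proof of the lemma.

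One factual slip should be corrected, though it does not undermine the conclusion: the poles of the numerator Gammas in $g_{\alpha,\beta,\gamma,\delta}(s,t,\ma)$ do \emph{not} ``lie at distance $\asymp t$ from any fixed vertical strip.'' They sit at $s=-\tfrac12-\alpha-it-\ma-2n$ (and symmetrically in $\beta,\gamma,\delta$), so their \emph{real} parts cluster near $-\tfrac12-\ma-2n$ independently of $t$; a vertical strip extends over all imaginary parts, so the large imaginary part $\asymp t$ of these poles does not put them ``far'' from the strip. In particular, when $\ma=0$, $n=0$, and $\re(\alpha)<0$, the first such pole has real part slightly greater than $-\tfrac12$ and \emph{is} crossed by your leftward shift. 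The shift is nevertheless harmless, but for a different reason: at that pole one has $\re(s^{2})\approx -t^{2}$, so the factor $\exp(s^{2})$ in $G$ makes the residue $O(e^{-t^{2}/2})$, which is negligible. (Alternatively, shift only to $\re(s)=-\tfrac12+2\ve$ with $\ve$ slightly larger than the size of the shifts, thereby avoiding the Gamma poles entirely while still giving the required $O(1)$ bound.) Finally, a pedantic remark: the statement that each $t$-derivative of $g$ ``costs a factor $t^{-1}$'' is true, but it is not literally a consequence of \eqref{lemVg}; it requires differentiating the Stirling expansion (equivalently, polygamma estimates), which is a parallel rather than a corollary of the displayed asymptotic.
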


These two lemmas are well-known results, deduced from Stirling's approximation standardly. To eliminate the difference between even and odd characters in the sum of $M(\alpha,\beta,\gamma,\delta,\Phi)$, we appeal to the following lemma.

\begin{lemma}
For $t$ large, we have
\begin{align}
\label{lemV2}
&V_{\alpha,\beta,\gamma,\delta}(x,t,0)-V_{\alpha,\beta,\gamma,\delta}(x,t,1) \ll  t^{-1+\ve},\\
\label{lemV3}
&\wt{V}_{\alpha,\beta,\gamma,\delta}(x,t,0)-\wt{V}_{\alpha,\beta,\gamma,\delta}(x,t,1) \ll  t^{-1+\ve}.
\end{align}

\end{lemma}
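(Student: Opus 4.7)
Both assertions rest on the same observation: the $\ma$-dependence of $g_{\alpha,\beta,\gamma,\delta}(s,t,\ma)$ and of $X_{-\gamma,-\delta,-\alpha,-\beta}(q,t,\ma)$ enters only as shifts of Gamma arguments by $\tfrac12$, and Stirling's formula absorbs such shifts into the subleading $O(t^{-1})$ term. Concretely \eqref{lemVg} reads $g_{\alpha,\beta,\gamma,\delta}(s,t,\ma)=(t/2\pi)^{2s}(1+O(t^{-1}(1+|s|^2)))$ uniformly in $\ma\in\{0,1\}$, so the leading factor $(t/2\pi)^{2s}$ is $\ma$-free and subtraction yields
\[
g_{\alpha,\beta,\gamma,\delta}(s,t,0)-g_{\alpha,\beta,\gamma,\delta}(s,t,1)\ll \left(\tfrac{t}{2\pi}\r)^{2\re(s)}t^{-1}(1+|s|^2);
\]
similarly \eqref{lemVX} gives $X_{-\gamma,-\delta,-\alpha,-\beta}(q,t,0)-X_{-\gamma,-\delta,-\alpha,-\beta}(q,t,1)\ll t^{-1}$. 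This is the starting point for both \eqref{lemV2} and \eqref{lemV3}.

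To prove \eqref{lemV2}, the plan is to take the difference of the Mellin--Barnes representations \eqref{defV} and shift the contour from $\re(s)=1$ past the origin down to $\re(s)=-\ve$. The only singularity crossed is the simple pole of $1/s$ at $s=0$: $G$ is entire, and the Gamma-function poles of $g_{\alpha,\beta,\gamma,\delta}$ all sit in $\re(s)\le-\tfrac12+O((\log T_1q)^{-1})$ under the shift hypothesis. The residue equals $G(0)g_{\alpha,\beta,\gamma,\delta}(0,t,\ma)=1$, which is $\ma$-free and hence cancels in the difference, leaving
\[
V_{\alpha,\beta,\gamma,\delta}(x,t,0)-V_{\alpha,\beta,\gamma,\delta}(x,t,1)=\frac{1}{2\pi i}\int_{(-\ve)}\frac{G(s)}{s}\bigl[g_{\alpha,\beta,\gamma,\delta}(s,t,0)-g_{\alpha,\beta,\gamma,\delta}(s,t,1)\bigr]x^{-s}ds.
\]
Using the difference bound above together with the Gaussian decay $|G(-\ve+iu)|\ll e^{-u^2/2}$ supplied by $\exp(s^2)$ in $G$, the right-hand side is $\ll t^{-1-2\ve}x^{\ve}$. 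To conclude, I would split on $x$: for $x\le t^{2+\ve}$ this is directly $\ll t^{-1+\ve}$, while for $x>t^{2+\ve}$ the rapid decay \eqref{lemVV} makes $V(x,t,0)$ and $V(x,t,1)$ separately smaller than any fixed negative power of $t$.

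For \eqref{lemV3} I would apply the product rule
\begin{align*}
\wt V_{\alpha,\beta,\gamma,\delta}(x,t,0)-\wt V_{\alpha,\beta,\gamma,\delta}(x,t,1)&=X(q,t,0)\bigl[V(x,t,0)-V(x,t,1)\bigr]\\
&\quad+\bigl[X(q,t,0)-X(q,t,1)\bigr]V(x,t,1),
\end{align*}
writing $X$ short for $X_{-\gamma,-\delta,-\alpha,-\beta}$ and suppressing the four-tuple of shifts on $V$. The first bracket is $\ll t^{-1+\ve}$ by \eqref{lemV2}; the second is $\ll t^{-1}$ by the opening observation; and $|X|,|V|\ll 1$ follow from \eqref{lemVX} and from the very same contour shift (which in particular shows $V=1+O(t^{-2\ve}x^{\ve})$ on the relevant range). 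The only genuinely non-routine bookkeeping in the whole argument is verifying that no pole of $g_{\alpha,\beta,\gamma,\delta}$ sits between $\re(s)=1$ and $\re(s)=-\ve$, and this is immediate once the range $\alpha,\beta,\gamma,\delta\ll(\log T_1q)^{-1}$ of the shifts is recalled.
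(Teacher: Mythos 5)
Your argument is correct and is essentially the same as the paper's: both reduce \eqref{lemV2} to a contour shift of the Mellin--Barnes integral to $\re(s)=-\ve$, using the Stirling estimate \eqref{lemVg} to bound $g_{\alpha,\beta,\gamma,\delta}(s,t,0)-g_{\alpha,\beta,\gamma,\delta}(s,t,1)\ll t^{-1}(1+|s|^2)$, and both deduce \eqref{lemV3} from \eqref{lemV2} together with \eqref{lemVX} and \eqref{lemVV}. The only cosmetic difference is that you pick up the residue $G(0)g(0,t,\ma)=1$ from each term and observe it cancels, whereas the paper shifts the difference directly and notes that $g(s,t,0)-g(s,t,1)$ vanishes at $s=0$ so the integrand has no pole — these are the same observation phrased two ways.
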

\begin{proof}
Due to \eqref{lemVV}, we assume $x\ll t^{2+\ve}$ in \eqref{lemV2} and \eqref{lemV3} since the estimates are obvious otherwise.
Recalling the definition of $V$ in \eqref{defV}, we rewrite that
\begin{align}
V_{\alpha,\beta,\gamma,\delta}(x,t,0)-V_{\alpha,\beta,\gamma,\delta}(x,t,1)=\frac1{2\pi i}\int_{(1)}\frac{G(s)}{s}\left(g_{\alpha,\beta,\gamma,\delta}(s,t,0) -g_{\alpha,\beta,\gamma,\delta}(s,t,1)\right)x^{-s}ds.\notag
\end{align}
We move the integral to $\re(s)=-\ve$ without encountering any poles, observing that $g_{\alpha,\beta,\gamma,\delta}(s,t,0)-g_{\alpha,\beta,\gamma,\delta}(s,t,1)$ takes zeros at $s=0$. On the new path, we can easily see \eqref{lemV2} from a result of \eqref{lemVg} that
\begin{align}
  g_{\alpha,\beta,\gamma,\delta}(s,t,0)-g_{\alpha,\beta,\gamma,\delta}(s,t,1)\ll t^{-1}(1+|s|^2).\notag
\end{align}

By the definition of $\wt{V}$ in \eqref{eqdefwtV}, the estimate \eqref{lemV3} is an immediate result of \eqref{lemVX}, \eqref{lemVV} and \eqref{lemV2}. This establishes the lemma.
\end{proof}

\subsection{The number of primitive characters}
Let $\varphi^*(q)$ denote the number of primitive characters modulo $q$. It is known that $\varphi^*(q)$ is a multiplicative function defined by
\begin{align}
  \varphi(p^m)=\left\{
  \begin{aligned}
  &p^{m-2}(p-1)^2, \ \ \text{for}\ \ m\ge2,\\
  &p-2, \ \ \ \ \ \ \ \ \ \ \ \ \ \ \text{for}\ \ m=1.\notag
  \end{aligned}
  \right.
\end{align}

\subsection{The orthogonality formula}
\begin{lemma}[The orthogonality formula]
\label{lemof}
For $(mn,q)=1$, we have
\begin{align}
\label{lemof1}
\mathop{{\sum}^*}_{\chi~(\bmod q)}\chi(m)\ol{\chi}(n)=\sum_{d\mid(q,m-n)}\vp(d)\mu(q/d).
\end{align}
Moreover
\begin{align}
\label{lemof2}
\mathop{{\sum}^*}_{\substack{\chi~(\bmod q)\\ \chi(-1)=(-1)^\ma}}\chi(m)\ol{\chi}(n) =\frac12\sum_{d\mid(q,m-n)}\vp(d)\mu(q/d) +\frac{(-1)^\ma}2\sum_{d\mid(q,m+n)}\vp(d)\mu(q/d).
\end{align}
\end{lemma}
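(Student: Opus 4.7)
For part \eqref{lemof1}, the plan is to run a standard Möbius inversion argument that is really an exercise on induction of Dirichlet characters. Every character $\chi \pmod q$ is induced from a unique primitive character $\chi^* \pmod d$ for some $d \mid q$, and under the coprimality assumption $(mn,q)=1$ one has $\chi(m)\ol\chi(n)=\chi^*(m)\ol{\chi^*}(n)$. Grouping the full character sum by conductor gives the identity
\[
\sum_{\chi~(\bmod q)}\chi(m)\ol\chi(n) \;=\; \sum_{d\mid q}\mathop{{\sum}^*}_{\chi^*~(\bmod d)}\chi^*(m)\ol{\chi^*}(n),
\]
while the left-hand side equals $\vp(q)\cdot\mathbf{1}_{m\equiv n\,(\bmod q)}$ by the usual orthogonality of all characters mod $q$. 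Setting $f(q):=\sum^*_{\chi~(\bmod q)}\chi(m)\ol\chi(n)$, one has $\sum_{d\mid q}f(d)=\vp(q)\mathbf{1}_{q\mid m-n}$, and Möbius inversion produces
\[
f(q)=\sum_{d\mid q}\mu(q/d)\vp(d)\mathbf{1}_{d\mid m-n} =\sum_{d\mid (q,m-n)}\vp(d)\mu(q/d),
\]
which is exactly \eqref{lemof1}.

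For part \eqref{lemof2}, the plan is to insert the projector onto characters of fixed parity and reduce to two applications of \eqref{lemof1}. Since $\chi(-1)\in\{\pm 1\}$, the indicator $\mathbf{1}_{\chi(-1)=(-1)^\ma}$ equals $\tfrac12\bigl(1+(-1)^\ma\chi(-1)\bigr)$, so the restricted character sum splits as
\[
\mathop{{\sum}^*}_{{\chi~(\bmod q)}\atop{\chi(-1)=(-1)^\ma}}\!\!\chi(m)\ol\chi(n) \;=\; \tfrac12\mathop{{\sum}^*}_{\chi~(\bmod q)}\chi(m)\ol\chi(n) +\tfrac{(-1)^\ma}{2}\mathop{{\sum}^*}_{\chi~(\bmod q)}\chi(m)\ol\chi(n)\chi(-1).
\]
In the second sum, the real-valued factor $\chi(-1)=\ol\chi(-1)$ can be absorbed into $\ol\chi(n)$ to give $\ol\chi(-n)$. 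Both resulting sums are now instances of \eqref{lemof1} (the second one with $n$ replaced by $-n$, which preserves the coprimality condition $(mn,q)=1$), and substituting gives
\[
\tfrac12\sum_{d\mid(q,m-n)}\vp(d)\mu(q/d)+\tfrac{(-1)^\ma}{2}\sum_{d\mid(q,m+n)}\vp(d)\mu(q/d),
\]
which is \eqref{lemof2}. There is no serious obstacle here; the only thing to be careful with is that the induction identity and the sign manipulation all require $(mn,q)=1$, which is a standing hypothesis, so no new error terms or degenerate cases appear.
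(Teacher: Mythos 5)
Your proof is correct and is the standard argument; the paper itself does not reproduce a proof but simply refers to Heath-Brown and Soundararajan, where precisely this Möbius-inversion over conductors for \eqref{lemof1} and the parity projector $\tfrac12(1+(-1)^{\ma}\chi(-1))$ combined with $\chi(-1)=\ol\chi(-1)$ for \eqref{lemof2} are used.
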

This orthogonality formula is well-known, and its proof may be refereed to \cite{HB81} and \cite{Sou07}.

\subsection{Two partitions of unity}\label{secpartition}
We appeal to a partition introduced in \cite{BBLR16}. Let $f$ be a smooth function that
\begin{align}
f(x)+f(1/x)=1\notag
\end{align}
for all $x\in \mathbb{R}$ and $f(x)\ll_j(1+x)^{-j}$ for any fixed $j>0$ and $x>1$. Also it has the Mellin inversion
\begin{align}
f(x)=\frac1{2\pi i}\int_{(\ve)}\widehat{f}(u)x^{-u}du,\notag
\end{align}
where $\widehat{f}(u)$ has a simple pole at $u=0$ with residue $1$, and satisfies
\begin{align}
\widehat{f}\left(\pm \frac{(\alpha-\beta)}2\r)=\widehat{f}\left(\pm \frac{(\gamma-\delta)}2\r)=0.\notag
\end{align}
One may apply the identity
\begin{align}\label{efg}
f\left(\frac{m_1}{m_2}\r)f\left(\frac{n_1}{n_2}\r)
+f\left(\frac{m_2}{m_1}\r)f\left(\frac{n_1}{n_2}\r) +f\left(\frac{m_1}{m_2}\r)f\left(\frac{n_2}{n_1}\r) +f\left(\frac{m_2}{m_1}\r)f\left(\frac{n_2}{n_1}\r)=1
\end{align}
to partition unity into four roughly similar terms. Then in each term, there exists a comparison on the sizes of $m_1, m_2$ and $n_1, n_2$.

The second one is the dyadic partition. Let $W(x)$ be a smooth non-negative function compactly supported on $[1,2]$ such that
\begin{align}
\sum_{M}W\left(\frac xM\r)=1,\notag
\end{align}
where $M$ varies over a set of positive real numbers with $\#\{M: X^{-1}\le M\le X\}\ll(\log X)$. The $W$ function has the Mellin pair
\begin{align}
\left\{
\begin{aligned}
&\widehat{W}(u)=\int_0^\infty W(x)x^{u_1-1}dx,\notag\\
&W(x)=\frac1{2\pi i}\int_{(c_u)}\widehat{W}(u)x^{-u}du.
\end{aligned}
\right.
\end{align}

\section{Initial treatment of the shifted moment}\label{secT}
From this section, we start our proof of Theorem \ref{thmLT}, which occupies next two sections. We assume $T\gg q^\ve$, a convention that holds throughout the proof of Theorem \ref{thmLT}.

\subsection{Initial treatment}
Using the approximate functional equation stated in Lemma \ref{lemafe}, we break $M(\alpha,\beta,\gamma,\delta,\Phi)$ into two terms that
\begin{align}\label{eMD}
M(\alpha,\beta,\gamma,\delta,\Phi)=A_1(\alpha,\beta,\gamma,\delta,\Phi) +A_{-1}(-\gamma,-\delta,-\alpha,-\beta,\Phi),
\end{align}
where $A_1$ is the contribution from the `first part' of the approximate functional equation that
\begin{align}
A_1=&\frac1{\varphi^*(q)}\mathop{{\sum}^*}_{\chi~(\bmod q)}\sum_{m,n}\frac{\sigma_{\alpha,\beta}(m) \sigma_{\gamma,\delta}(n)\chi(m)\overline{\chi}(n)} {(mn)^{\frac12}}\int_R\left(\frac mn\r)^{-it}
V_{\alpha,\beta,\gamma,\delta}\left(\frac{mn}{q^2},t,\ma\r)\Phi(t)dt,\notag
\end{align}
and $A_{-1}$ is the `second part' that
\begin{align}
A_{-1}=&\frac1{\varphi^*(q)}\mathop{{\sum}^*}_{\chi~(\bmod q)}\sum_{m,n}\frac{\sigma_{-\gamma,-\delta}(m)\sigma_{-\alpha,-\beta}(n) \chi(m)\overline{\chi}(n)} {(mn)^{\frac12}}\int_R\left(\frac mn\r)^{-it}
V_{-\gamma,-\delta,-\alpha,-\beta}\left(\frac{mn}{q^2},t,\ma\r)\Phi(t)dt\notag
\end{align}
Our major focus is on the evaluation of $A_1$, and the treatment of $A_{-1}$ is identical.

Before applying the orthogonality formula of primitive characters, we first remove the dependence of $V_{\alpha,\beta,\gamma,\delta}(x,t,\ma)$ on the parity of $\chi$ by rewriting it into two parts as
\begin{align}
\label{eqpv}
V_{\alpha,\beta,\gamma,\delta}(x,t,\ma) =&\frac12\left(V_{\alpha,\beta,\gamma,\delta}(x,t,0) +V_{\alpha,\beta,\gamma,\delta}(x,t,1)\r)\\ &+\frac{\chi(-1)}{2}\left(V_{\alpha,\beta,\gamma,\delta}(x,t,0) -V_{\alpha,\beta,\gamma,\delta}(x,t,1)\r).\notag
\end{align}
Note that the second part would just contribute an  error to $A_1$. To be specific, we insert \eqref{eqpv} into $A_1$, and then the contribution of the second part is
\begin{align}\label{eqsecondpart}
\frac1{2\varphi^*(q)} \mathop{{\sum}^*}_{\chi~(\bmod q)}\sum_{m,n} &\frac{\sigma_{\alpha,\beta}(m)\sigma_{\gamma,\delta} (n) \chi(-m)\ol{\chi}(n)}{(mn)^{\frac12}}\\
&\times\int_R \left(\frac mn\r)^{-it}\left(V_{\alpha,\beta,\gamma,\delta}\left(\frac{mn}{q^2},t,0\r) -V_{\alpha,\beta,\gamma,\delta}\left(\frac{mn}{q^2},t,1\r)\r)\Phi(t)dt.\notag
\end{align}
The averaging over $t$-aspect forces $m$ and $n$ to be very close to each other. More precisely, integration by parts shows
\begin{align}
\int_R \left(\frac mn\r)^{-it}\left(V_{\alpha,\beta,\gamma,\delta}\left(\frac{mn}{q^2},t,0\r) -V_{\alpha,\beta,\gamma,\delta}\left(\frac{mn}{q^2},t,1\r)\r)\Phi(t)dt\ll_j \frac T{(T_0\log\frac mn)^j}\notag
\end{align}
for any $j\ge1$, which yields that the integral over $t$ is very small unless $\left|1-\frac{m}{n}\right|\ll T_0^{-1+\ve}$.
After applying the orthogonality formula \eqref{lemof1} and the estimate \eqref{lemV2}, we find that \eqref{eqsecondpart} is bounded by
\begin{align}
&\ll \frac{T^{\ve}}{\varphi^*(q)}\sum_{d\mid (q,m+n)}\vp(d)\sum_{\substack {mn\le (Tq)^{2+\ve}\\ \left|1-\frac{m}{n}\right|\ll T_0^{-1+\ve}}}\frac{\sigma_{\alpha,\beta}(m)\sigma_{\gamma,\delta}(n)} {(mn)^{\frac12}}+O(T^{-2020}q^{-2020})\notag\\
&\ll\frac{T^{\ve}q^\ve}{\varphi^*(q)}\sum_{d\mid q}\vp(d)\frac{Tq}{T_0^{1-\ve}\varphi(d)}\ll T^\ve q^{\ve}(T/T_0),\notag
\end{align}
which is the second error term of the asymptotic formula in Theorem \ref{thmLT}.

Note that the first part of \eqref{eqpv} has nothing to do with the parity of $\chi$, and we can just average all primitive characters in $A_1$ to evaluate its contribution.
After applying the orthogonality formula to this part, we find that
\begin{align}\label{eq+A1}
A_1(\alpha,\beta,\gamma,\delta,\Phi)=&\frac12\sum_{\ma=0,1}\frac1{\varphi^*(q)} \sum_{d\mid q}\vp(d)\mu\left(\frac qd\r) \sum_{\substack{(mn,q)=1\\ m\equiv n~(\bmod d)}}\frac{\sigma_{\alpha,\beta}(m) \sigma_{\gamma,\delta}(n)} {(mn)^{\frac12}}\\
&\times\int_R\left(\frac mn\r)^{-it}V_{\alpha,\beta,\gamma,\delta}\left(\frac{mn}{q^2},t,\ma\r) \Phi(t)dt+O(T^\ve q^{\ve}(T/T_0) ).\notag
\end{align}
It is easy to see that a similar expression holds for $A_{-1}$.

We break the sum in \eqref{eq+A1} into diagonal terms and off-diagonal terms, that is
\begin{align}\label{eA1D}
A_1(\alpha,\beta,\gamma,\delta,\Phi)=A_D(\alpha,\beta,\gamma,\delta,\Phi) +A_O(\alpha,\beta,\gamma,\delta,\Phi)+O(T^\ve q^{\ve}(T/T_0) ),
\end{align}
where
\begin{align}\label{eq+AD}
A_D(\alpha,\beta,\gamma,\delta,\Phi)=&\frac12\sum_{\ma=0,1} \frac1{\varphi^*(q)}\sum_{d\mid q}\vp(d)\mu\left(\frac qd\r) \sum_{(n,q)=1}\frac{\sigma_{\alpha,\beta}(n)\sigma_{\gamma,\delta}(n)} {n}\\
&\times\int_R V_{\alpha,\beta,\gamma,\delta}\left(\frac{n^2}{q^2},t,\ma\r) \Phi(t)dt,\notag
\end{align}
and
\begin{align}\label{eqdefAO}
A_O(\alpha,\beta,\gamma,\delta,\Phi)=&\frac12\sum_{\ma=0,1} \frac1{\varphi^*(q)}\sum_{d\mid q}\vp(d)\mu\left(\frac qd\r) \sum_{\pm}\sum_{\substack{m-n=\pm h\neq0 \\ (mn,q)=1,~d\mid h}}\frac{\sigma_{\alpha,\beta}(m)\sigma_{\gamma,\delta}(n)} {(mn)^{\frac12}} \\
&\times\int_R\left(1\pm\frac {h}{n}\r)^{-it}V_{\alpha,\beta,\gamma,\delta}\left(\frac{mn} {q^2},t,\ma\r) \Phi(t)dt.\notag
\end{align}
Here the sum $\sum_{\substack{m-n=\pm h\neq0 \\ (mn,q)=1,~d\mid h}}$ is over positive integers $m, n$, and $h$.
Also, we have
\begin{align}\label{eA-1D}
A_{-1}(-\gamma,-\delta,-\alpha,-\beta,\Phi) =&A_{-D}(-\gamma,-\delta,-\alpha,-\beta,\Phi)\\ &+A_{-O}(-\gamma,-\delta,-\alpha,-\beta,\Phi)+O(T^{\ve}q^{\ve}(T/T_0))\notag
\end{align}
with similar expressions for $A_{-D}(-\gamma,-\delta,-\alpha,-\beta,\Phi)$ and $A_{-O}(-\gamma,-\delta,-\alpha,-\beta,\Phi)$.

\subsection{The diagonal terms}\label{secdiagonal1}
For the diagonal terms, we insert the definition of $V$ into \eqref{eq+AD} to see
\begin{align}
A_D(\alpha,\beta,\gamma,\delta,\Phi)=&\frac12\sum_{\ma=0,1}\int_R\Phi(t)\frac1{2\pi i}\int_{(1)} \frac{G(s)}{s}q^{2s}g_{\alpha,\beta,\gamma,\delta}(s,t,\ma) \sum_{(n,q)=1} \frac{\sigma_{\alpha,\beta}(n)\sigma_{\gamma,\delta}(n)}{n^{1+2s}}dsdt.\notag
\end{align}
Then, by the Ramanujan identity, the sum over $n$ is
\begin{align}
  \frac{\zeta_q(1+\alpha+\gamma+2s) \zeta_q(1+\alpha+\delta+2s)\zeta_q(1+\beta+\gamma+2s)\zeta_q(1+\beta+\delta+2s)} {\zeta_q(2+\alpha+\beta+\gamma+\delta+4s)},\notag
\end{align}
which has simple poles at $2s=-\alpha-\gamma$, \emph{etc}, while $G(s)$ vanishes at these poles. Therefore, we can move the integral to $\re(s)=-\frac14+\ve$, passing a pole at $s=0$. By the estimate of $g$ in \eqref{lemVg}, the integral on the new path is
\begin{align}
\ll q^{-\frac12+\ve}\int_R t^{-\frac12+\ve}\Phi(t)dt\ll T^{\frac12+\ve}q^{-\frac12+\ve},\notag
\end{align}
and residue at $s=0$ is
\begin{align}
Z_{q}(\alpha,\beta,\gamma,\delta)\int_R\Phi(t)dt.\notag
\end{align}
We summarize this calculation in the following:
\begin{lemma}
\label{lemD}
We have
\begin{align}
A_D(\alpha,\beta,\gamma,\delta,\Phi)=Z_{q}(\alpha,\beta,\gamma,\delta) \int_R\Phi(t)dt+O\left(T^{\frac12+\ve}q^{-\frac12+\ve}\r),
\end{align}
and similarly the contribution of the diagonal terms to $A_{-1}$ is
\begin{align}
A_{-D}(-\gamma,&-\delta,-\alpha,-\beta,\Phi)\\
&=\frac12Z_{q}(-\gamma,-\delta,-\alpha,-\beta) \sum_{\ma=0,1}\int_R X_{\alpha,\beta,\gamma,\delta}(q,t,\ma)\Phi(t)dt+O\left(T^{\frac12+\ve}q^{-\frac12+\ve}\r).\notag
\end{align}
\end{lemma}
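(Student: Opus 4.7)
The plan is to evaluate $A_D$ and $A_{-D}$ in parallel by unfolding the Mellin representation of $V$ (respectively $\wt V$), identifying the resulting inner Dirichlet series via the Ramanujan identity, and extracting the main term by shifting the $s$-contour past $s=0$ while bounding the shifted integral through Stirling's approximation.

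For $A_D$, the prefactor $\phi^*(q)^{-1}\sum_{d\mid q}\vp(d)\mu(q/d)$ equals $1$ by M\"obius inversion applied to $\phi^*$. Inserting \eqref{defV} and interchanging the sum over $n$ with the contour integral on $\re(s)=1$ reduces $A_D$ to a double integral whose inner sum is precisely the ratio of $\zeta_q$ factors exhibited in the paragraph preceding the lemma. That ratio has simple poles at $2s=-\alpha-\gamma$, $-\alpha-\delta$, $-\beta-\gamma$, $-\beta-\delta$, all of which are cancelled by the prescribed zeros of $P_{\alpha,\beta,\gamma,\delta}$ in Definition \ref{DefG}. I can therefore shift the contour from $\re(s)=1$ to $\re(s)=-\tfrac14+\ve$ and pick up only the simple pole at $s=0$, whose residue is exactly $Z_q(\alpha,\beta,\gamma,\delta)\int_R\Phi(t)\,dt$.

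The error comes from the shifted line. On $\re(s)=-\tfrac14+\ve$, Lemma \ref{lemDV} gives $g_{\alpha,\beta,\gamma,\delta}(s,t,\ma)\ll t^{-1/2+\ve}(1+|s|^2)$, the zeta ratio is bounded polynomially in $|\im s|$, and $q^{2s}\ll q^{-1/2+2\ve}$, while the $\exp(s^2)$ factor in $G(s)$ supplies absolute convergence in $s$. Integrating against $\Phi(t)$ (supported in $[T/2,4T]$ and of size $O(1)$) gives a $t$-integral of size $O(T^{1/2+\ve})$ and yields the claimed bound $O(T^{1/2+\ve}q^{-1/2+\ve})$.

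For $A_{-D}$, the integrand involves $\wt V_{-\gamma,-\delta,-\alpha,-\beta}(x,t,\ma)=X_{\alpha,\beta,\gamma,\delta}(q,t,\ma)V_{-\gamma,-\delta,-\alpha,-\beta}(x,t,\ma)$ by \eqref{eqdefwtV}. Since $X_{\alpha,\beta,\gamma,\delta}(q,t,\ma)$ depends only on $t$ and $\ma$, it factors outside both the $s$-contour and the sum over $n$, so the preceding analysis transfers verbatim with the shifts $(\alpha,\beta,\gamma,\delta)$ replaced by $(-\gamma,-\delta,-\alpha,-\beta)$. This delivers the stated main term $\tfrac12 Z_q(-\gamma,-\delta,-\alpha,-\beta)\sum_\ma\int_R X_{\alpha,\beta,\gamma,\delta}(q,t,\ma)\Phi(t)\,dt$ together with the same $O(T^{1/2+\ve}q^{-1/2+\ve})$ remainder. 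The one point requiring care is to verify that $P_{\alpha,\beta,\gamma,\delta}(s)$ really cancels every auxiliary pole crossed between the two contours in both cases; since $P$ is even in $s$ and invariant under the sign flips of the shifts, the zero at $s=-(\alpha+\gamma)/2$ supplies zeros at $\pm(\alpha\pm\gamma)/2$ and their analogues, which together exhaust the relevant simple poles of both zeta ratios.
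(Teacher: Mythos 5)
Your proposal is correct and follows the same route as the paper: observe that $\phi^*(q)^{-1}\sum_{d\mid q}\vp(d)\mu(q/d)=1$, open the Mellin representation of $V$, evaluate the $n$-sum via the Ramanujan identity, shift the $s$-contour to $\re(s)=-\tfrac14+\ve$ past the only surviving pole at $s=0$ (the others being cancelled by zeros of $G$), and bound the remaining integral with \eqref{lemVg}; the $A_{-D}$ case is identical after pulling out the $X_{\alpha,\beta,\gamma,\delta}(q,t,\ma)$ factor. The minor imprecision in restating \eqref{lemVg} (the $(1+|s|^2)$ enters as a $1+O(t^{-1}(1+|s|^2))$ correction rather than a bare multiplicative factor) is harmless since $\exp(s^2)$ restricts the $s$-integral to $|\im s|\ll T^\ve$.
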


\section{Off-diagonal terms and the proof of Theorem \ref{thmLT}}\label{secdivisor}
\subsection{A divisor problem}
Our treatment of off-diagonal terms requires an estimate on a quadratic divisor problem.
\begin{lemma}
\label{lemmain}
Let $F(x_1,x_2,x_3,x_4,x_5)$ be a smooth function supported on $[1,2]^5$ such that
\begin{align}
\frac{\partial{F}^{(j_1+j_2)}}{\partial x_{i_1}^{j_1}\partial x_{i_2}^{j_2}}\ll_{j_1,j_2}(T/T_0)^{j_1+j_2}T^\ve q^\ve\notag
\end{align}
for any $j_1, j_2\ge 0$ and $i_1, i_2=1,2,3,4,5$.
With $M_1,M_2,N_1,N_2,H\ge1$, we define
\begin{align}
S_d^{\pm}=\sum_{\substack{m_1m_2-n_1n_2=\pm h\neq0 \\ (m_1m_2n_1n_2,q)=1 \\ d\mid h}} F\left(\frac hH,\frac{m_1}{M_1},\frac{m_2}{M_2}, \frac{n_1}{N_1},\frac{n_2}{N_2}\r),\notag
\end{align}
where the sum runs over positive integers $m_1,m_2,n_1,n_2$ and $h$. Suppose that $M_1\le M_2T^\ve q^\ve$, $N_1\le N_2T^\ve q^\ve$ and $H=o\left((M_1M_2N_1N_2)^{\frac12}\r)$. We have
\begin{align}\label{eqSdpm}
S_d^{\pm}=&\sum_{d_1,d_2\mid q}\frac{\mu(d_1)\mu(d_2)}{[d_1,d_2]} \sum_{\substack{m_1,n_1,h \\ (m_1n_1,q)=1}}\frac{k^2h\Delta }{m_1n_1}\int_0^\infty F\left(\frac{kh\Delta }{H},\frac{m_1}{M_1},\frac{kh\Delta (x\pm1)}{m_1M_2}, \frac{n_1}{N_1},\frac{kh\Delta x}{n_1N_2}\r)dx+\mathcal{E},
\end{align}
where $k=(m_1,n_1)$ , $\Delta=[d,(d_1,d_2)] $,
and
\begin{align}\label{lemdE}
\mathcal{E}\ll\frac{H}d N_1^{\frac12}q_0^{\frac12}(M_1+N_1)(T/T_0)^2T^\ve q^\ve.
\end{align}
Here $q_0$ is defined as in Theorem \ref{thmmainT} that $q_0=\max\{d: d\mid q^*, d<{q^*}^{\frac12}\}$ with $q^*=\prod_{p\mid q}p$.
\end{lemma}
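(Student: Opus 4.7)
The plan is to reduce $S_d^\pm$ to a Poisson summation problem along an arithmetic progression, after first extracting the $q$-coprimality conditions by M\"obius inversion. Writing
\[
\mathbf{1}_{(m_2 n_2,\, q)=1} = \sum_{d_1 \mid (m_2, q)} \mu(d_1) \sum_{d_2 \mid (n_2, q)} \mu(d_2),
\]
the problem splits into an outer sum over $d_1, d_2 \mid q$ of counting $m_1 m_2 - n_1 n_2 = \pm h$ with $d_1 \mid m_2$, $d_2 \mid n_2$, $d \mid h$, and $m_1, n_1$ coprime to $q$, smoothly weighted by $F$. With $(m_1, n_1, h)$ treated as outer variables and $k = (m_1, n_1)$, the linear equation forces $k \mid h$ and restricts $m_2$ to the residue class $m_2 \equiv \pm (h/k)\,\overline{m_1/k} \pmod{n_1/k}$; imposing the extra divisibilities (when compatible) lifts this to a single class modulo $\Delta\, n_1/k$ with $\Delta = [d, (d_1, d_2)]$, which is the source of the $1/[d_1, d_2]$ weight in the main term.

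Next I would apply Poisson summation to the sum over $r$, where $m_2 = m_2^0 + (\Delta n_1/k)\, r$. The zero-frequency term is
\[
\frac{k}{\Delta n_1} \int_{\mathbb{R}} F\Bigl(\tfrac{h}{H},\ \tfrac{m_1}{M_1},\ \tfrac{m_2}{M_2},\ \tfrac{n_1}{N_1},\ \tfrac{m_1 m_2 \mp h}{n_1 N_2}\Bigr)\, dm_2,
\]
which, after the substitution $x = (m_1 m_2 \mp h)/(n_1 N_2)$ and a reindexing $h \mapsto h \Delta$ to absorb $\Delta \mid h$ into the outer sum, assembles into the main term of \eqref{eqSdpm} with weight $k^2 h \Delta / (m_1 n_1)$.

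The main obstacle is bounding the nonzero-frequency contribution. The derivative bounds on $F$ truncate the Fourier expansion at $|\nu| \ll M_2 k\,(\Delta n_1)^{-1}\,(T/T_0)\,T^\ve q^\ve$, and each nonzero $\nu$ contributes an exponential $e(\nu\, m_2^0/(\Delta n_1/k))$ which, once $m_2^0$ is written out in terms of $\overline{m_1/k}\pmod{n_1/k}$ together with the analogous classes coming from $d, d_1, d_2$, becomes an incomplete Kloosterman sum in the outer variables. I would apply Weil's bound $|S(a,b;c)| \le (a,b,c)^{1/2} c^{1/2} d(c)$ frequency by frequency, sum trivially over $m_1, n_1, h$, and observe that the outer M\"obius sum over divisors $d_1, d_2 \mid q$ can be effectively restricted to divisors of $q^* = \prod_{p\mid q} p$ of size at most $\sqrt{q^*}$ via the involution $e \mapsto q^*/e$, whose maximum on the short side is $q_0$; this is the mechanism producing the factor $q_0^{1/2}$ in the final bound. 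The asymmetry $M_1 + N_1$ (rather than $M_2 + N_2$) in the stated error reflects the hypothesis $M_1 \le M_2 T^\ve q^\ve$, $N_1 \le N_2 T^\ve q^\ve$, under which applying Poisson in $m_2$ rather than $m_1$ or $n_1$ is the efficient choice, while $H = o((M_1 M_2 N_1 N_2)^{1/2})$ is used to rule out secondary main terms coming from the Fourier expansion.
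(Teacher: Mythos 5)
Your broad plan — Möbius inversion to strip the coprimality on $m_2,n_2$, Poisson summation along the residue class for $m_2$, then Weil's bound on the resulting Kloosterman sums — is indeed the route the paper takes. However, two of your key quantitative claims are wrong, and together they would destroy the stated error bound.

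First, the Poisson modulus is not $\Delta n_1/k$ with $\Delta=[d,(d_1,d_2)]$. After M\"obius inversion the inner equation reads $d_1m_1m_2-d_2n_1n_2=\pm h$, with $(d_1m_1,d_2n_1)=(d_1,d_2)k$ by $(m_1n_1,q)=1$, so $m_2$ runs over a single class modulo $d_2 n_1/((d_1,d_2)k)$. The divisor $d\mid q$ enters only as a divisibility condition on $h$; it does not refine the residue class of $m_2$. In the paper that condition is handled later by a change of variable $h\mapsto h\Delta/d_{12}$ \emph{in the zero--frequency term only}, which is where $\Delta$ appears in the main term — not as a Poisson modulus. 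If you did run Poisson at modulus $\Delta n_1/k$ and then applied Weil, the factor $\sqrt{\Delta n_1/k}$ with $\Delta$ possibly as large as $q$ would give an error of order $q^{1/2}$ rather than $q_0^{1/2}$.

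Second, your explanation of where $q_0^{1/2}$ comes from — ``the outer Möbius sum can be effectively restricted to divisors of $q^*$ of size at most $\sqrt{q^*}$'' — cannot be carried out as stated: the sum $\sum_{d_1,d_2\mid q}\mu(d_1)\mu(d_2)(\cdots)$ genuinely runs over all squarefree divisor pairs and cannot be truncated to small ones. The actual mechanism is a factorization: write $d_{12}=(d_1,d_2)$, $d_1=d_1'd_{12}$, $d_2=d_2'd_{12}$ with $(d_1',d_2')=1$. Since $d_1'd_2'$ is squarefree and divides $q^*$, necessarily $\min\{d_1',d_2'\}\le q_0$. One then \emph{chooses} to apply Poisson in $m_2$ when $d_2'\le q_0$ (so the modulus is $d_2' n_1/k\le q_0 N_1/k$ and Weil yields $\sqrt{d_2'n_1}\le\sqrt{q_0 N_1}$), and symmetrically in $n_2$ when $d_1'\le q_0$. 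Your proposal only discusses choosing $m_2$ over $m_1,n_1$ (which is about variable length, controlled by $M_1\le M_2$, $N_1\le N_2$), and omits the $m_2$-versus-$n_2$ choice entirely; that choice is precisely what produces $q_0^{1/2}$ instead of $q^{1/2}$ in the error \eqref{lemdE}, and is the crux of the lemma.
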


\begin{remark}
In \eqref{eqSdpm}, the integral over $x$ in $S_d^{-}$ is actually on $x>1$ since $F$ is supported on $[1,2]^5$. By making the change of variables $x\rightarrow x+1$, we can obtain another form for $S_d^{-}$ that
\begin{align}
S_d^{-}=&\sum_{d_1,d_2\mid q}\frac{\mu(d_1)\mu(d_2)}{[d_1,d_2]} \sum_{\substack{m_1,n_1,h \\ (m_1n_1,q)=1}}\frac{k^2h\Delta }{m_1n_1}\int_0^\infty F\left(\frac{kh\Delta }{H},\frac{m_1}{M_1},\frac{kh\Delta x}{m_1M_2}, \frac{n_1}{N_1},\frac{kh\Delta (x+1)}{n_1N_2}\r)dx+\mathcal{E}.\notag
\end{align}
This expression looks symmetrical with respect to the expression of $S_d^{+}$.
\end{remark}

\begin{proof}
The condition $H=o\left((M_1M_2N_1N_2)^{\frac12}\r)$ implies that $M_1M_2\asymp N_1N_2$. We first consider the sum over the large variables $m_2$, $n_2$, where we rewrite the coprime condition $(m_2n_2,q)=1$ in terms of the M\"obius function that
\begin{align}\label{eqsumF}
\sum_{\substack{(m_2n_2,q)=1 \\ m_1m_2-n_1n_2=\pm h}}F\left(\frac{h}{H},\frac{m_1}{M_1}, \frac{m_2}{M_2}, \frac{n_1}{N_1},\frac{n_2}{N_2}\r)=&\sum_{d_1,d_2\mid q}\mu(d_1)\mu(d_2)\\
&\times\sum_{\substack{m_2,n_2 \\ d_1m_1m_2-d_2n_1n_2=\pm h}} F\left(\frac{h}{H},\frac{m_1}{M_1},\frac{d_1m_2}{M_2}, \frac{n_1}{N_1},\frac{d_2n_2}{N_2}\r).\notag
\end{align}
Let $d_{12}=(d_1,d_2)$. Applying \eqref{eqsumF} with variable changes $d_1\rightarrow d_1d_{12}$, $d_2\rightarrow d_2d_{12}$, we rewrite
\begin{align}
S_d^{\pm}=\mathop{\sum_{d_1d_{12}\mid q}\sum_{d_2d_{12}\mid q}}_{(d_1,d_2)=1}\mu(d_1d_{12})\mu(d_2d_{12})S_d^{\pm}(d_1,d_2,d_{12}),\notag
\end{align}
where
\begin{align}
S_d^{\pm}(d_1,d_2,d_{12})=\sum_{\substack{m_1,m_2,n_1,n_2,h \\ d_1m_1m_2-d_2n_1n_2=\pm h/{d_{12}} \\ (m_1n_1,q)=1, d\mid h}} F\left(\frac{h}{H},\frac{m_1}{M_1},\frac{d_1d_{12}m_2}{M_2}, \frac{n_1}{N_1},\frac{d_2d_{12}n_2}{N_2}\r).\notag
\end{align}
By the definition of $q_0$, there must be $\min\{d_1,d_2\}\le q_0$. Without loss of generality, we would focus ourself on the evaluation of $S_d^{\pm}(d_1,d_2,d_{12})$ with $d_2\le q_0$, and the treatment of the other case is identical.

Since $(d_1,d_2)=1$ and $(m_1n_1,q)=1$, we have $(d_1m_1,d_2n_1)=(m_1,n_1)=k$. Now there is no restriction on the sum over $m_2$ and $n_2$ except for the identity
$d_1m_1m_2-d_2n_1n_2=\pm h/d_{12}$, which we can rewrite as $m_2\equiv(\pm h/kd_{12})\ol{d_1m_1/k} \pmod {d_2n_1/k}$ to eliminate the variable $n_2$. This yields that the sum over $m_2, n_2$ in $S_d^{\pm}(d_1,d_2,d_{12})$ is equal to
\begin{align}
&\sum_{m_2\equiv(\pm h/kd_{12})\ol{d_1m_1/k} ~(\bmod {d_2n_1/k})} F\left(\frac{h}{H},\frac{m_1}{M_1},\frac{d_1d_{12}m_2}{M_2}, \frac{n_1}{N_1},\frac{d_1d_{12}m_1m_2\mp h}{n_1N_2}\r).\notag
\end{align}

Observing that $d_{12}k\mid h$, we make the variable change $h\rightarrow d_{12}kh$, and then the condition $d\mid h$ in $S_d^{\pm}(d_1,d_2,d_{12})$ evolves into $d\mid d_{12}h$ as $(d,k)=1$.
We apply Possion's summation formula to the sum over $m_2$ to see
\begin{align}\label{PossionSk}
S_d^{\pm}(d_1,d_2,d_{12})=\sum_{\substack{m_1,n_1,h \\ (m_1n_1,q)=1 \\ d\mid {d_{12}}h}} \sum_{l\in\mathbb{Z}}e\left(\mp lh\frac{\ol{d_1m_1/k}}{d_2n_1/k}\r) \mathcal{F}_{\pm}(k,d_1,d_{12},h,m_1,n_1,l),
\end{align}
where
\begin{align}
\mathcal{F}_{\pm}&=\frac{k}{d_2n_1}\int_0^\infty
F\left(\frac{d_{12}kh}{H},\frac{m_1}{M_1},\frac{d_1d_{12}x}{M_2}, \frac{n_1}{N_1},\frac{d_1d_{12}m_1x\mp d_{12}kh}{n_1N_2}\r)
e\left(\frac{klx}{d_2n_1}\r)dx\notag\\
&=\int_0^\infty F\left(\frac{d_{12}kh}{H},\frac{m_1}{M_1},\frac{d_1d_2d_{12}n_1x}{kM_2}, \frac{n_1}{N_1},\frac{d_1d_2d_{12}m_1x}{kN_2}\mp\frac{d_{12}kh}{n_1N_2}\r)e\left(lx\r)dx.\notag
\end{align}
Since $F$ is supported on $[1,2]^5$, the integral is actually on the range
\[
x\asymp \frac{kM_2}{d_1d_2d_{12}N_1}\asymp\frac{kN_2}{d_1d_2d_{12}M_1}.
\]

The contribution of the term $l=0$ is
\begin{align}
S^{*\pm}_{d}(d_1,d_2,d_{12})=\sum_{\substack{m_1,n_1,h \\ (m_1n_1,q)=1 \\ d\mid d_{12}h}}\int_0^\infty F\left(\frac{d_{12}kh}{H},\frac{m_1}{M_1},\frac{d_1d_2d_{12}n_1x}{kM_2}, \frac{n_1}{N_1},\frac{d_1d_2d_{12}m_1x}{kN_2}\mp\frac{d_{12}kh}{n_1N_2}\r)dx.\notag
\end{align}
After a variable change $h\rightarrow h[d,d_{12}]/d_{12}=h\Delta/d_{12}$, this evolves into
\begin{align}
\sum_{\substack{m_1,n_1,h \\ (m_1n_1,q)=1}}\int_0^\infty F\left(\frac{kh\Delta }{H},\frac{m_1}{M_1},\frac{d_1d_2d_{12}n_1x}{kM_2}, \frac{n_1}{N_1},\frac{d_1d_2d_{12}m_1x}{kN_2}\mp\frac{kh\Delta }{n_1N_2}\r)dx,\notag
\end{align}
which would contribute to the main term.

For the terms $l\neq0$, integrating by parts $j$ times shows
\begin{align}
\mathcal{F}_{\pm}(k,d_1,d_{12},h,m_1,n_1,l)&\ll T^\ve q^\ve\frac1{l^j} \left(\frac{d_1d_2d_{12}n_1}{kM_2}+\frac{d_1d_2d_{12}m_1}{kN_2}\r)^j (T/T_0)^j\frac{kM_2}{d_1d_2d_{12}n_1}\notag\\
&\ll T^\ve q^\ve\left(\frac{d_1d_2d_{12}N_1}{klM_2}\r)^j(T/T_0)^j\frac{kM_2}{d_1d_2d_{12}n_1}\notag
\end{align}
for any fixed $j\ge0$. This indicates that we can restrict the sum in \eqref{PossionSk} to $0\le|l|\le L$ with
\begin{align}
L=\frac{d_1d_2d_{12}N_1}{kM_2}(T/T_0)T^\ve q^\ve.\notag
\end{align}
Thus, \eqref{PossionSk} evolves into
\begin{align}
S_d^{\pm}(d_1,d_2,d_{12})=S^{*\pm}_{d}(d_1,d_2,d_{12})+\mathcal{E}'\notag
\end{align}
with
\begin{align}
\label{E'}
\mathcal{E}'&=\sum_{k\le H/d}\mathop{\sum_{d_1d_{12}\mid q}\sum_{d_2d_{12}\mid q}}_{(d_1,d_2)=1}\mu(d_1d_{12})\mu(d_2d_{12})\int_0^\infty\sum_{\substack{m_1,n_1,h \\ (m_1,n_1)=1 \\ (m_1n_1,q)=1 \\ d\mid {d_{12}}h}} \sum_{0<|l|\le L}\\
&\times F\left(\frac{d_{12}kh}{H},\frac{m_1k}{M_1},\frac{d_1d_2d_{12}n_1x}{M_2}, \frac{n_1k}{N_1},\frac{d_1d_2d_{12}m_1x}{N_2}\mp\frac{d_{12}h }{n_1N_2}\r)e\left(\mp lh\frac{\ol{d_1m_1}}{d_2n_1}\r) e\left(lx\r)dx,\notag
\end{align}
where we have made variable changes $m_1\rightarrow m_1k$ and $n_1\rightarrow n_1k$.
Note that
$\frac{\partial F}{\partial m_1}\ll \frac k{M_1}(T/T_0)T^\ve q^\ve\ll m_1^{-1}(T/T_0)T^\ve q^\ve $ for $x\asymp\frac{kM_2}{d_1d_2d_{12}N_1}$.
After a summation by parts with the Weil bound for Kloosterman sums, we have
\begin{align}
\sum_{\substack{m_1 \\ (m_1,n_1q)=1}}& F\left(\frac{d_{12}kh}{H},\frac{m_1k}{M_1},\frac{d_1d_2d_{12}n_1x}{M_2}, \frac{n_1k}{N_1},\frac{d_1d_2d_{12}m_1x}{N_2}\mp\frac{d_{12}h}{n_1N_2}\r) e\left(\mp lh\frac{\ol{d_1m_1}}{d_2n_1}\r)\notag\\
&\ll  (lh,n_1d_2)n_1^{\frac12}d_2^{\frac12}\left(1+\frac{M_1}{N_1}\r)(T/T_0) T^\ve q^\ve.\notag
\end{align}
With this in \eqref{E'}, a direct calculation shows that
\begin{align}
\mathcal{E}'&\ll  T^\ve q^\ve \sum_{k\le H/d}\mathop{\sum_{d_1d_{12}\mid q}\sum_{d_2d_{12}\mid q}}_{(d_1,d_2)=1} \sum_{\substack{h\le H/{d_{12}}k \\ d\mid {d_{12}}h}}\sum_{\substack{0<|l|\le L \\ n_1\ll N_1/k}} (lh,n_1d_2)n_1^{\frac12}d_2^{\frac12}\left(1+\frac{M_1}{N_1}\r)(T/T_0) \frac{kM_2}{d_1d_2d_{12}N_1}\notag\\
&\ll   \frac{H}d N_1^{\frac12}q_0^{\frac12}(M_1+N_1)(T/T_0)^2 T^\ve q^\ve\notag
\end{align}
for $d_2\le q_0$. This gives the error term of \eqref{lemdE}.

When $d_1\le q_0$, an identical treatment shows that
\begin{align}
S_d^{\pm}(d_1,d_2,d_{12})=& \sum_{\substack{m_1,n_1,h \\ (m_1n_1,q)=1}}\int_0^\infty F\left(\frac{kh\Delta }{H},\frac{m_1}{M_1},\frac{d_1d_2d_{12}n_1x}{kM_2}, \frac{n_1}{N_1},\frac{d_1d_2d_{12}m_1x}{kN_2}\mp\frac{kh\Delta }{n_1N_2}\r)dx+\mathcal{E}.\notag
\end{align}
The only difference is to eliminate the variable $m_2$ first, and then we should apply Possion's summation formula to the sum over $n_2$ instead.

In conclusion, we sum $S_d^{\pm}(d_1,d_2,d_{12})$ over all possible values of $d_1, d_2$, and $d_{12}$ to get
\begin{align}
S_d^{\pm}=&\mathop{\sum_{d_1d_{12}\mid q}\sum_{d_2d_{12}\mid q}}_{(d_1,d_2)=1}\mu(d_1d_{12})\mu(d_2d_{12})\notag\\
&\times \sum_{\substack{m_1,n_1,h \\ (m_1n_1,q)=1}}\int_0^\infty F\left(\frac{kh\Delta }{H},\frac{m_1}{M_1},\frac{d_1d_2d_{12}n_1x}{kM_2}, \frac{n_1}{N_1},\frac{d_1d_2d_{12}m_1x}{kN_2}\mp\frac{kh\Delta }{n_1N_2}\r)dx+\mathcal{E}.\notag
\end{align}
Making a variable change $x\rightarrow \frac{k^2h\Delta }{d_1d_2d_{12}m_1n_1}(x\pm1)$ in the integral, we have
\begin{align}
S_d^{\pm}=\mathop{\sum_{d_1d_{12}\mid q}\sum_{d_2d_{12}\mid q}}_{(d_1,d_2)=1}&\frac{\mu(d_1d_{12})\mu(d_2d_{12})}{d_1d_2d_{12}} \sum_{\substack{m_1,n_1,h \\ (m_1n_1,q)=1}}\frac{k^2h\Delta }{m_1n_1}\notag\\
&\ \ \ \ \ \ \ \ \ \ \ \ \ \ \ \ \times\int_0^\infty F\left(\frac{kh\Delta }{H},\frac{m_1}{M_1},\frac{kh\Delta (x\pm1)}{m_1M_2}, \frac{n_1}{N_1},\frac{kh\Delta x}{n_1N_2}\r)dx+\mathcal{E}.\notag
\end{align}
This would establish the lemma if we rewrite $d_1d_{12}$ as $d_1$ and $d_2d_{12}$ as $d_2$ in the sum.
\end{proof}
\subsection{Evaluation of $A_{O}$ and $A_{-O}$}
In this section, we produce asymptotic formulae for $A_O$ and $A_{-O}$. Before doing this, we present here a lemma required in following calculation.
\begin{lemma}\label{lemdivisorq}
For any $s$, we have
\begin{align}\label{eqdivisorq}
  \sum_{d\mid q}\vp(d)\mu\left(\frac qd\r)\sum_{d_1,d_2\mid q}\frac{\mu(d_1)\mu(d_2)}{[d_1,d_2]\Delta ^{s}}=\varphi^*(q)q^{-s}\prod_{p\mid q}\left(1-\frac1{p^{1-s}}\r),
\end{align}
where $\Delta=[d,(d_1,d_2)]$.
\end{lemma}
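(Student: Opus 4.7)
The plan is to observe that both sides of \eqref{eqdivisorq} are multiplicative functions of $q$, and then verify the identity at prime powers. Write
\[
F(q) := \sum_{d\mid q}\varphi(d)\mu(q/d)\sum_{d_1,d_2\mid q}\frac{\mu(d_1)\mu(d_2)}{[d_1,d_2]\,\Delta^{s}},\qquad \Delta=[d,(d_1,d_2)].
\]
For $q=q_1q_2$ with $(q_1,q_2)=1$, every divisor $d\mid q$ factors uniquely as $d=d^{(1)}d^{(2)}$ with $d^{(i)}\mid q_i$, and the same holds for $d_1,d_2$. Since $\varphi$ and $\mu$ are multiplicative, and since $[d_1,d_2]$, $(d_1,d_2)$, and $[d,(d_1,d_2)]$ all factor as products over the coprime components, the whole sum $F(q)$ splits as $F(q_1)F(q_2)$. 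The right-hand side of \eqref{eqdivisorq} is obviously multiplicative, so it suffices to verify the identity at $q=p^k$.

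For the prime-power computation, note that $\mu(q/d)\ne 0$ forces $q/d$ squarefree, hence $d\in\{p^{k-1},p^k\}$ when $k\ge 2$ and $d\in\{1,p\}$ when $k=1$; similarly $d_1,d_2\in\{1,p\}$. A short direct enumeration over $d_1,d_2\in\{1,p\}$ gives
\[
\sum_{d_1,d_2\in\{1,p\}}\frac{\mu(d_1)\mu(d_2)}{[d_1,d_2]}=1-\frac1p,
\]
and $\Delta=[d,(d_1,d_2)]$ equals $p^{\max(v_p(d),v_p((d_1,d_2)))}$. For $k\ge 2$ one has $\Delta=p^{k-1}$ when $d=p^{k-1}$ and $\Delta=p^k$ when $d=p^k$, independently of $(d_1,d_2)$. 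Collecting the two values of $d$ with the weights $-\varphi(p^{k-1})$ and $\varphi(p^k)$ yields
\[
F(p^k)=\bigl(1-\tfrac1p\bigr)\bigl(\varphi(p^k)p^{-ks}-\varphi(p^{k-1})p^{-(k-1)s}\bigr)=p^{k-2}(p-1)^2 p^{-ks}\bigl(1-p^{s-1}\bigr),
\]
which matches $\phi^*(p^k)p^{-ks}(1-p^{s-1})$. The case $k=1$ is an analogous two-term computation (now with $d\in\{1,p\}$ and $\Delta=(d_1,d_2)$ or $\Delta=p$) and gives $F(p)=(p-2)(p^{-s}-p^{-1})=\phi^*(p)p^{-s}(1-p^{s-1})$.

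The argument has no analytic difficulty; the only thing to be careful about is bookkeeping the value of $\Delta$ in the two cases $d=p^{k-1}$ and $d=p^k$, which is what makes the identity come out cleanly: the common factor $(1-1/p)$ from the $(d_1,d_2)$ sum pairs with $\varphi(p^k)-p^{s}\varphi(p^{k-1})$ to produce $\phi^*(p^k)(1-p^{s-1})$. Thus the ``hard part'' is essentially choosing the right organization of the computation; once multiplicativity reduces the problem to prime powers, the small cardinality of the admissible $d,d_1,d_2$ (forced by squarefreeness from $\mu$) makes the verification a finite enumeration.
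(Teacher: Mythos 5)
Your proof is correct and follows essentially the same route as the paper: reduce to prime powers by multiplicativity, then verify directly by enumerating the divisors $d, d_1, d_2$ surviving the $\mu$-factors. You give a slightly cleaner presentation of the $k\ge 2$ case by observing that $\Delta$ is independent of $(d_1,d_2)$ there, so the $(d_1,d_2)$-sum factors out as $1-1/p$; the paper's computation is equivalent but less explicitly organized.
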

\begin{proof}
Since both sides of \eqref{eqdivisorq} are multiplicative functions on $q$, we just check the identity for prime power. If $q=p$ is a prime, the left-hand side of \eqref{eqdivisorq} is
\begin{align}
  (p-1)\left(\frac1{p^s}-\frac1{p^{1+s}}\right)-\left(1-\frac2p+\frac1{p^{1+s}}\right)=(p-2)p^{-s}\left(1-\frac1{p^{1-s}}\right),\notag
\end{align}
and the identity holds obviously.
If $q=p^m$ with $m\ge2$, the left-hand side of \eqref{eqdivisorq} is
\begin{align}
  \varphi(p^m)\left(\frac1{p^{ms}}-\frac1{p^{1+ms}}\right)-\varphi(p^{m-1})\left(\frac1{p^{(m-1)s}}-\frac1{p^{1+(m-1)s}}\right)=\varphi(p^{m-1})p^{-ms}(p-1) \left(1-\frac1{p^{1-s}}\right),\notag
\end{align}
which is equal to the right-hand side too. Combining these two cases would establish the lemma.
\end{proof}

We specify our asymptotic formulae for $A_O$ and $A_{-O}$ in the following:
\begin{lemma} \label{lemAOA-O}
Let $A_O$ and $A_{-O}$ be defined as before. We have
\begin{align}\label{eAO}
A_O(\alpha,\beta,\gamma,\delta,\Phi)=&\mathcal{M}_{\alpha,\beta,\gamma,\delta}(\Phi) +\mathcal{M}_{\beta,\alpha,\gamma,\delta}(\Phi)\\ +&\mathcal{M}_{\alpha,\beta,\delta,\gamma}(\Phi) +\mathcal{M}_{\beta,\alpha,\delta,\gamma}(\Phi)+O\left( T^{\frac34+\ve}(q/q_0^2)^{-\frac14+\ve}(T/T_0)^3+T^\ve q^\ve\r),\notag
\end{align}
where
\begin{align}
\mathcal{M}_{\alpha,\beta,\gamma,\delta}(\Phi) =&\frac{\zeta_q(1+\alpha-\beta)\zeta_q(1+\gamma-\delta)} {\zeta_q(2+\alpha-\beta+\gamma-\delta)}\int_R \Phi(t)\left(\frac{tq}{2\pi}\r)^{-\beta-\delta}dt\notag\\
&\times\frac1{2\pi i}\int_{(\ve)} \frac{G(s)}{s} \zeta_q(1-\beta-\delta-2s) \zeta_q(1+\alpha+\gamma+2s)ds\notag.
\end{align}
Also,
\begin{align}\label{eA-O}
A_{-O}(-\gamma,-\delta,-\alpha,-\beta,\Phi)= & \widetilde{\mathcal{M}}_{-\gamma,-\delta,-\alpha,-\beta}(\Phi) +\widetilde{\mathcal{M}}_{-\delta,-\gamma,-\alpha,-\beta}(\Phi) +\widetilde{\mathcal{M}}_{-\gamma,-\delta,-\beta,-\alpha}(\Phi)\\ & +\widetilde{\mathcal{M}}_{-\delta,-\gamma,-\beta,-\alpha}(\Phi)+O\left( T^{\frac34+\ve}(q/q_0^2)^{-\frac14+\ve}(T/T_0)^3+T^\ve q^\ve\r),\notag
\end{align}
where, for example,
\begin{align}\label{eqwtM}
\widetilde{\mathcal{M}}_{-\delta-\gamma-\beta-\alpha}(\Phi) =&\frac{\zeta_q(1+\alpha-\beta)\zeta_q(1+\gamma-\delta)} {\zeta_q(2+\alpha-\beta+\gamma-\delta)}\int_R \Phi(t)\left(\frac{tq}{2\pi}\r)^{-\beta-\delta}dt\\
&\times\frac1{2\pi i}\int_{(\ve)} \frac{G(s)}{s} \zeta_q(1-\beta-\delta+2s)\zeta_q(1+\alpha+\gamma-2s)ds.\notag
\end{align}
\end{lemma}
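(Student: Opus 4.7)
The plan is to expand the divisor functions in $A_O$, partition unity into pieces that fit the shape of Lemma \ref{lemmain}, apply that lemma, and then extract the main term via Mellin inversion. First I write $\sigma_{\alpha,\beta}(m)=\sum_{m_1m_2=m}m_1^\alpha m_2^\beta$ and $\sigma_{\gamma,\delta}(n)=\sum_{n_1n_2=n}n_1^\gamma n_2^\delta$, converting the inner sum in \eqref{eqdefAO} into one over $(m_1,m_2,n_1,n_2,h)$ with $m_1m_2-n_1n_2=\pm h$. I then apply the four-fold partition \eqref{efg} to split according to the ordering $m_1\lessgtr m_2$ and $n_1\lessgtr n_2$. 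By the built-in $\alpha\leftrightarrow\beta$ and $\gamma\leftrightarrow\delta$ symmetries these four pieces produce exactly the four terms $\mathcal{M}_{\alpha,\beta,\gamma,\delta}$, $\mathcal{M}_{\beta,\alpha,\gamma,\delta}$, $\mathcal{M}_{\alpha,\beta,\delta,\gamma}$, $\mathcal{M}_{\beta,\alpha,\delta,\gamma}$, so it suffices to treat the piece where $m_1\le m_2$ and $n_1\le n_2$.

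Next I insert dyadic partitions on $m_1,m_2,n_1,n_2,h$ with scales $M_1\le M_2$, $N_1\le N_2$, $H$, and fold everything smooth -- the dyadic cutoffs, the monomial weights $m_1^{\alpha-1/2}m_2^{\beta-1/2}n_1^{\gamma-1/2}n_2^{\delta-1/2}$, the oscillating factor $(1\pm h/n)^{-it}$, and the $t$-integral against $V_{\alpha,\beta,\gamma,\delta}(mn/q^2,t,\ma)\Phi(t)$ -- into a single smooth weight $F$ of the five dyadic-normalised variables. Integration by parts on $t$ together with \eqref{lemVV} and the support of $\Phi$ shows that the contribution is negligible unless $|m-n|\ll nT_0^{-1}T^\ve$, so in particular $H=o((M_1M_2N_1N_2)^{1/2})$. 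Thus Lemma \ref{lemmain} applies. Summing the error \eqref{lemdE} over the $O(\log^5(T_1q))$ dyadic scales and $d\mid q$, with $V$ restricting to $mn\ll T^{2+\ve}q^{2+\ve}$, gives exactly the error $T^{3/4+\ve}(q/q_0^2)^{-1/4+\ve}(T/T_0)^3+T^\ve q^\ve$ appearing in \eqref{eAO}.

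The main work is identifying the main term produced by Lemma \ref{lemmain}. After the change of variables encoded in \eqref{eqSdpm}, I set $k=(m_1,n_1)$, $m_1=km'$, $n_1=kn'$ with $(m',n')=1$, and recover $V$ through its Mellin representation \eqref{defV}; after the resulting sums over $k,m',n',h$ are recognised as Dirichlet series, the coprimality condition $(m_1n_1,q)=1$ combined with the arithmetic outer sum $\sum_{d\mid q}\varphi(d)\mu(q/d)\sum_{d_1,d_2\mid q}\mu(d_1)\mu(d_2)/([d_1,d_2]\Delta^s)$ collapses via Lemma \ref{lemdivisorq} to the local factor $\phi^*(q)q^{-s}\prod_{p\mid q}(1-p^{s-1})$. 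Shifting the Mellin contour and collecting residues -- with the polynomial $P_{\alpha,\beta,\gamma,\delta}$ of Definition \ref{DefG} chosen precisely to annihilate the spurious poles at $s=-(\alpha+\gamma)/2$ and their symmetric partners -- produces $\zeta_q(1+\alpha-\beta)\zeta_q(1+\gamma-\delta)/\zeta_q(2+\alpha-\beta+\gamma-\delta)$ together with the inner $s$-integral of $G(s)s^{-1}\zeta_q(1-\beta-\delta-2s)\zeta_q(1+\alpha+\gamma+2s)$. Stirling's approximation \eqref{lemVg} applied to $g_{\alpha,\beta,\gamma,\delta}(s,t,\ma)$ inside the $t$-integral finally produces the scaling factor $(tq/2\pi)^{-\beta-\delta}$, giving $\mathcal{M}_{\alpha,\beta,\gamma,\delta}(\Phi)$ exactly as stated.

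For $A_{-O}$ the argument is mechanically identical but with $V$ replaced by $\wt V$. By \eqref{eqdefwtV} this inserts an extra factor $X_{-\gamma,-\delta,-\alpha,-\beta}(q,t,\ma)$, which after the change of variables $(\alpha,\beta,\gamma,\delta)\mapsto(-\gamma,-\delta,-\alpha,-\beta)$ carried out throughout the calculation converts into the factor $X_{\alpha,\beta,\gamma,\delta}(q,t,\ma)$ (and its symmetric counterparts $X_{\beta,\alpha,\delta,\gamma}$ etc. in the other three pieces) appearing in $\widetilde{\mathcal{M}}$, together with the conjugate scaling $(tq/2\pi)^{\alpha+\gamma}$. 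The main obstacle is the joint bookkeeping of the Mellin contour shifts and the arithmetic sum over $d,d_1,d_2$: one must verify that \emph{every} pole encountered outside the contours prescribed by Definition \ref{DefG} is cancelled either by the designed zeros of $G(s)$ or by Lemma \ref{lemdivisorq}, so that the surviving residues fit cleanly into the compact form of $\mathcal{M}$ and $\widetilde{\mathcal{M}}$ with no additional error beyond $\mathcal{E}$.
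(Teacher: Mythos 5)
Your proposal follows the same overall strategy as the paper: expand the divisor functions, apply the four-fold partition \eqref{efg}, insert dyadic partitions, invoke Lemma~\ref{lemmain}, then extract the main term via Mellin inversion, Euler-product evaluation, and Lemma~\ref{lemdivisorq}. The structure is correct, but two points of bookkeeping are misattributed in a way that would cause trouble if you tried to execute the plan.

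First, you attribute the cancellation of the spurious poles encountered during the Mellin contour shifts to the zeros of $G(s)$ from Definition~\ref{DefG} (and, oddly, to ``Lemma~\ref{lemdivisorq}'', which evaluates an arithmetic sum and has nothing to do with pole cancellation). In fact, inside the proof of this lemma the $s$-contour is never shifted across the pole of $\zeta_q(1+\alpha+\gamma+2s)$; what gets shifted are the two auxiliary Mellin variables $u,v$ coming from the partition function $f$ in \eqref{efg}, and the poles at $u=-(\alpha-\beta)/2$ and $v=-(\gamma-\delta)/2$ are killed by the designed zeros of $\widehat f$ (see Sect.~\ref{secpartition}), not by $G$. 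The zeros of $G$ at $\pm(\alpha+\gamma)/2$ are only used later, when $\mathcal{M}$ and $\widetilde{\mathcal{M}}$ are combined and the $s$-contour is finally evaluated by residues in the proof of Theorem~\ref{thmLT}. Without the zeros of $\widehat f$ your contour shift would produce residues that have no counterpart in the stated formula for $\mathcal{M}_{\alpha,\beta,\gamma,\delta}(\Phi)$.

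Second, you claim the entire error $T^{3/4+\ve}(q/q_0^2)^{-1/4+\ve}(T/T_0)^3 + T^\ve q^\ve$ comes from summing $\mathcal{E}$ of Lemma~\ref{lemmain} over dyadic scales. Only the first term does. The $T^\ve q^\ve$ piece arises separately, in the main-term evaluation: applying Stirling to the ratio of gamma factors $J_\pm(s,u,v)$ produces an $O(1/T)$ relative error, and eliminating $g_{\alpha,\beta,\gamma,\delta}$ via \eqref{lemVg} after taking the double residue at $u=v=0$ produces the explicit $T^\ve q^\ve$ term; the contour shift to $\re(s)=1/4$ for the remaining one-residue pieces gives $T^{1/2+\ve}q^{-1/2+\ve}$, which is absorbed. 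A clean write-up must track these three distinct error sources; asserting ``no additional error beyond $\mathcal{E}$'' would leave the final $T^\ve q^\ve$ unexplained.

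A smaller remark: the factor $(tq/2\pi)^{-\beta-\delta}$ in $\mathcal{M}_{\alpha,\beta,\gamma,\delta}(\Phi)$ does not come from Stirling alone. It arises from combining $g_{\alpha,\beta,\gamma,\delta}(s,t,\ma)\approx(t/2\pi)^{2s}$, the factor $(2\pi/(t\Delta))^{\beta+\delta+2s}$ produced by the functional equation of $\zeta$ applied to the $h$-sum, the $q^{2s}$ from the approximate functional equation, and the $q^{-\beta-\delta-2s}$ supplied by Lemma~\ref{lemdivisorq}; the intermediate exponentials of $s$, $t$, and $\Delta$ cancel exactly. You should exhibit the $J_\pm$ beta-function identities and this cancellation explicitly, since they are the mechanism by which the local factor $\prod_{p\mid q}(1-p^{\beta+\delta+2s-1})$ converts $\zeta(1-\beta-\delta-2s)$ into $\zeta_q(1-\beta-\delta-2s)$.
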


\begin{proof}
We focus ourself on the evaluation of $A_O$, and the treatment of $A_{-O}$ is identical. Recall the expression of $A_{O}$ in \eqref{eqdefAO}, and we rewrite it as
\begin{align}
A_O(\alpha,\beta,\gamma,\delta,\Phi)=&\frac12\sum_{\ma=0,1} \frac1{\varphi^*(q)}\sum_{d\mid q}\vp(d)\mu\left(\frac qd\r) \sum_{\pm}\sum_{\substack{m_1m_2-n_1n_2=\pm h\neq0 \\ (m_1m_2n_1n_2,q)=1,~d\mid h}}\frac{1} {m_1^{\frac12+\alpha}m_2^{\frac12+\beta}n_1^{\frac12+\gamma}n_2^{\frac12+\delta}} \notag\\
&\times\int_R\left(1\pm\frac {h}{n_1n_2}\r)^{-it}V_{\alpha,\beta,\gamma,\delta}\left(\frac{m_1m_2n_1n_2} {q^2},t,\ma\r) \Phi(t)dt.\notag
\end{align}
The estimate \eqref{lemVV} yields that $V(x,t,\ma)$ decays rapidly in $x$ when $x>t^2$, that is to say, the sum over all $m_1m_2n_1n_2\gg (Tq)^{2+\ve}$ gives a negligible contribution $\ll T^{-2020}q^{-2020}$. Also, by integration by parts, there is
\begin{align}
\int_R\left(1\pm\frac {h}{n_1n_2}\r)^{-it}V_{\alpha,\beta,\gamma,\delta}\left(\frac{m_1m_2n_1n_2} {q^2},t,\ma\r) \Phi(t)dt\ll_j\frac{T}{(hT_0/n_1n_2)^j}\notag
\end{align}
for any fixed $j\ge0$, which yields that the contribution of all the terms with $|h|\gg\sqrt{m_1m_2n_1n_2}T_0^{-1}T^\ve q^\ve$ is $O\left(T^{-2020}q^{-2020}\r)$. Hence, we have
\begin{align}
A_O(\alpha,\beta,\gamma,\delta,\Phi)=&\frac12\sum_{\ma=0,1} \frac1{\varphi^*(q)}\sum_{d\mid q}\vp(d)\mu\left(\frac qd\r) \sum_{\pm}\sum_{\substack{m_1m_2n_1n_2\le (Tq)^{2+\ve} \\ m_1m_2-n_1n_2=\pm h \\ 0<h\ll\sqrt{m_1m_2n_1n_2}T_0^{-1}T^\ve q^\ve \\ (m_1m_2n_1n_2,q)=1,~d\mid h}}\frac{1} {m_1^{\frac12+\alpha}m_2^{\frac12+\beta}n_1^{\frac12+\gamma}n_2^{\frac12+\delta}} \notag\\
&\times\int_R\left(1\pm\frac {h}{n_1n_2}\r)^{-it}V_{\alpha,\beta,\gamma,\delta}\left(\frac{m_1m_2n_1n_2} {q^2},t,\ma\r) \Phi(t)dt+O\left(T^{-2020}q^{-2020}\r).\notag
\end{align}

Applying the first partition of unity \eqref{efg}, we rewrite $A_O$ as
\begin{align}\label{eqAO1234}
A_O(\alpha,\beta,\gamma,\delta,\Phi)=A_{O,1}+A_{O,2}+A_{O,3}+A_{O,4} +O\left(T^{-2020}q^{-2020}\r)
\end{align}
with obvious meanings. We will focus on $A_{O,1}$, contributed by $f\left(\frac{m_1}{m_2}\r)f\left(\frac{n_1}{n_2}\r)$, and the treatments for other three terms are identical. Recall that the factor $f\left(\frac{m_1}{m_2}\r)f\left(\frac{n_1}{n_2}\r)$ means that the sum in $A_{O,1}$ is actually over positive integers with $m_1\le m_2$ and $n_1\le n_2$.

We apply the dyadic partition of unity to the sums over $m_1,m_2,n_1,n_2$, and $h$, and it follows that
\begin{align}
A_{O,1}=&\frac12\sum_{\ma=0,1} \frac1{\varphi^*(q)}\sum_{d\mid q}\vp(d)\mu\left(\frac qd\r)\notag \\
&\times\sum_{\substack{M_1M_2N_1N_2\le (Tq)^{2+\ve} \\ M_1\le M_2T^\ve q^\ve ,N_1\le N_2T^\ve q^\ve \\ H\ll\sqrt{m_1m_2n_1n_2}T_0^{-1}T^\ve q^\ve}} \left(S_{d,\ma}^{+}(M_1,M_2,N_1,N_2,H)+S_{d,\ma}^{-}(M_1,M_2,N_1,N_2,H)\r),\notag
\end{align}
where
\begin{align}
S_{d,\ma}^{\pm}(M_1,&M_2,N_1,N_2,H)=\int_R\sum_{\substack {m_1m_2-n_1n_2=\pm h \\ (m_1m_2n_1n_2,q)=1,~d\mid h}}\frac{1} {m_1^{\frac12+\alpha}m_2^{\frac12+\beta}n_1^{\frac12+\gamma}n_2^{\frac12+\delta}}
V_{\alpha,\beta,\gamma,\delta}\left(\frac{m_1m_2n_1n_2} {q^2},t,\ma\r)\notag\\
& \times\left(1\pm\frac {h}{n_1n_2}\r)^{-it}f\left(\frac{m_1}{m_2}\r)f\left(\frac{n_1}{n_2}\r) W\left(\frac{h}{H}\r) W\left(\frac{m_1}{M_1}\r)W\left(\frac{m_2}{M_2}\r)W\left(\frac{n_1}{N_1}\r) W\left(\frac{n_2}{N_2}\r)\Phi(t)dt.\notag
\end{align}
To estimate $S_{d,\ma}^{\pm}(M_1,M_2,N_1,N_2,H)$, we apply Lemma \ref{lemmain} with
\begin{align}
F=&\frac{1}{x_2^{\frac12+\alpha} x_3^{\frac12+\beta} x_4^{\frac12+\gamma} x_5^{\frac12+\delta}} V_{\alpha,\beta,\gamma,\delta}\left(x_2x_3x_4x_5 \frac{M_1M_2N_1N_2} {q^2},t,\ma\r)\left(1\pm\frac{x_1}{x_4x_5}\frac {H}{N_1N_2}\r)^{-it}\notag\\
& \times f\left(\frac{x_2}{x_3}\frac{M_1}{M_2}\r)f\left(\frac{x_4}{x_5}\frac{N_1}{N_2}\r) W\left(x_1\r) W\left(x_2\r)W\left(x_3\r)W\left(x_4\r) W\left(x_5\r).\notag
\end{align}
It is easy to check the condition of Lemma \ref{lemmain} here. Then, it follows that
\begin{align}
S_{d,\ma}^{\pm}(M_1,M_2,N_1,N_2,H)=\mathcal{M}_0^{\pm}(d,\ma)+\mathcal{E}_0,\notag
\end{align}
where
\begin{align}
\mathcal{E}_0\ll\frac{T\mathcal{E}}{M_1^{\frac12+\alpha}M_2^{\frac12+\beta} N_1^{\frac12+\gamma}N_2^{\frac12+\delta}}\notag
\end{align}
with $\mathcal{E}$ given by \eqref{lemdE},
and where
\begin{align}
\mathcal{M}_0^{\pm}(d,\ma)=&\sum_{d_1,d_2\mid q}\frac{\mu(d_1)\mu(d_2)}{[d_1,d_2]} \sum_{\substack{m_1,n_1,h \\ (m_1n_1,q)=1}} \frac{k(kh\Delta )^{-\beta-\delta}} {m_1^{1+\alpha-\beta}n_1^{1+\gamma-\delta}}\int_R\int_0^\infty (x\pm 1)^{-\frac12-\beta}x^{-\frac12-\delta}\notag
\\
&\times V_{\alpha,\beta,\gamma,\delta}\left(\frac{k^2h^2\Delta ^2x(x\pm 1)} {q^2},t,\ma\r)\left(1\pm\frac1x\r)^{-it}f\left(\frac{m_1^2}{kh\Delta (x\pm 1)}\r)f\left(\frac{n_1^2}{kh\Delta x}\r)\notag\\
&\times  W\left(\frac{kh\Delta }{H}\r)W\left(\frac{m_1}{M_1}\r)W\left(\frac{kh\Delta (x\pm 1)}{m_1M_2}\r)W\left(\frac{n_1}{N_1}\r) W\left(\frac{kh\Delta x}{n_1N_2}\r)\Phi(t)dxdt\notag
\end{align}
with $k=(m_1,n_1)$ and $\Delta=[d,(d_1,d_2)]$.

We come to the error term $\mathcal{E}_0$ first, whose contribution to $A_{O,1}$ is bounded by
\begin{align}
&\ll \frac {T^{1+\ve}q^\ve}{\varphi^*(q)}\sum_{\ma=0,1}\sum_{d\mid q}\vp(d)\left(M_1M_2N_1N_2\r)^{-\frac12}\left(\frac{H}d N_1^{\frac12}q_0^{\frac12}(M_1+N_1)(T/T_0)^2\r).\notag
\end{align}
As $H\ll \sqrt{m_1m_2n_1n_2}T_0^{-1}T^\ve q^\ve$ and $M_1,N_1\ll (M_1M_2N_1N_2)^{\frac14}T^\ve q^\ve$, it is bounded by
\begin{align}
\ll \frac {1}{\varphi^*(q)} (M_1M_2N_1N_2)^{\frac38}q_0^{\frac12}(T/T_0)^3T^\ve q^\ve \ll  T^{\frac34+\ve}(q/q_0^2)^{-\frac14+\ve}(T/T_0)^3.\notag
\end{align}

In the summation of $\mathcal{M}_0^{\pm}(d,\ma)$ over $M_1,M_2,N_1,N_2$ and $H$, we may remove the conditions $M_1\le M_2T^\ve q^\ve$, $N_1\le N_2T^\ve q^\ve$, $M_1M_2N_1N_2\le (Tq)^{2+\ve}$, $H\ll \sqrt{m_1m_2n_1n_2}T_0^{-1}T^\ve q^\ve$ with a negligible error, by applying estimates of $f$ and $V$ and integration by parts on $t$ as before. After extending the summation over all $M_1,M_2,N_1,N_2$ and $H$, we remove the dyadic partition of unity to find
\begin{align}
\mathcal{M}_1^{\pm}(d,\ma)=&\sum_{M_1,M_2,N_1,N_2,H}\mathcal{M}_0^{\pm}(d,\ma)\notag\\
=&\sum_{d_1,d_2\mid q}\frac{\mu(d_1)\mu(d_2)}{[d_1,d_2]} \sum_{\substack{m_1,n_1,h \\ (m_1n_1,q)=1}}\frac{k(kh\Delta )^{-\beta-\delta}} {m_1^{1+\alpha-\beta}n_1^{1+\gamma-\delta}}
 \int_R\int_0^\infty (x\pm 1)^{-\frac12-\beta-it}x^{-\frac12-\delta+it}\notag\\
&\times
V_{\alpha,\beta,\gamma,\delta}\left(\frac{k^2h^2\Delta ^2x(x\pm 1)} {q^2},t,\ma\r)f\left(\frac{m_1^2}{kh\Delta (x\pm 1)}\r)f\left(\frac{n_1^2}{kh\Delta x}\r)\Phi(t) dxdt\notag.\notag
\end{align}
Since $V(x,t,\ma)$ is supported on $x>0$, the $x$-integral in $\mathcal{M}_1^{-}(d,\ma)$ is actually over $x>1$. We make the change of variables $x\rightarrow x+1$ in $\mathcal{M}_1^{-}(d,\ma)$, then
\begin{align}
\mathcal{M}_1^{-}(d,\ma)=&\sum_{d_1,d_2\mid q}\frac{\mu(d_1)\mu(d_2)}{[d_1,d_2]} \sum_{\substack{m_1,n_1,h \\ (m_1n_1,q)=1}}\frac{k(kh\Delta )^{-\beta-\delta}} {m_1^{1+\alpha-\beta}n_1^{1+\gamma-\delta}}
 \int_R\int_0^\infty (x+1)^{-\frac12-\delta+it}x^{-\frac12-\beta-it}\notag\\
&\times
V_{\alpha,\beta,\gamma,\delta}\left(\frac{k^2h^2\Delta ^2x(x+1)} {q^2},t,\ma\r)f\left(\frac{m_1^2}{kh\Delta x}\r)f\left(\frac{n_1^2}{kh\Delta (x+1)}\r)\Phi(t) dxdt\notag.\notag
\end{align}
Recalling the definition of $V$ and expressing $f$ in terms of its Mellin transform, we have
\begin{align}\label{eM1M}
\mathcal{M}_1(d,\ma)=&\mathcal{M}_1^{+}(d,\ma)+\mathcal{M}_1^{-}(d,\ma)\\
=&\sum_{d_1,d_2\mid q}\frac{\mu(d_1)\mu(d_2)}{[d_1,d_2]} \frac1{(2\pi i)^3}\int_{(\ve)}\int_{(\ve)}\int_{(1)}\int_R \frac{G(s)}{s}g_{\alpha,\beta,\gamma,\delta}(s,t,\ma)\widehat{f}(u)\widehat{f}(v) q^{2s}\notag\\
&\times \left\{\sum_{\substack{m_1,n_1,h \\ (m_1n_1,q)=1}} \frac{k(kh\Delta)^{-\beta-\delta-2s+u+v}} {m_1^{1+\alpha-\beta+2u}n_1^{1+\gamma-\delta+2v}} \left(J_{+}(s,u,v)+J_{-}(s,u,v)\r)\r\} \Phi(t)dtdsdudv\notag
\end{align}
with
\begin{align}
  J_+(s,u,v)&=\int_0^\infty(x+ 1)^{-\frac12-\beta-s+u-it}x^{-\frac12-\delta-s+v+it}dx,\\
  J_-(s,u,v)&=\int_0^\infty(x+ 1)^{-\frac12-\delta-s+v+it}x^{-\frac12-\beta-s+u-it}dx.
\end{align}
By formula (3.194.3) of \cite{GR65} and the relationship between beta functions gamma functions, we have
\begin{align}
  J_+(s,u,v)=&B(\tfrac12-\delta-s+u+it,~ \beta+\delta+2s-u-v)\notag\\
  =&\frac{\Gamma(\frac12-\delta-s+v+it)\Gamma(\beta+\delta+2s-u-v)}{\Gamma(\frac12+\beta+s-u+it)}\notag
\end{align}
and
\begin{align}
  J_-(s,u,v)=\frac{\Gamma(\frac12-\beta-s+u-it)\Gamma(\beta+\delta+2s-u-v)}{\Gamma(\frac12+\delta+s-v-it)}.\notag
\end{align}
By Stirling's approximation,
\begin{align}
  \frac{\Gamma(\frac12-\delta-s+v+it)}{\Gamma(\frac12+\beta+s-u+it)}=&t^{-\beta-\delta-2s+u+v} \exp\left(\frac{\pi i}{2}(-\beta-\delta-2s+u+v)\r)\notag\\
  &\times\left(1+O\left(\frac{1+|s|^2+|u|^2+|v|^2}{t}\r)\r),\notag
\end{align}
\begin{align}
  \frac{\Gamma(\frac12-\beta-s+v-it)}{\Gamma(\frac12+\delta+s-u-it)}=&t^{-\beta-\delta-2s+u+v} \exp\left(-\frac{\pi i}{2}(-\beta-\delta-2s+u+v)\r)\notag\\
  &\times\left(1+O\left(\frac{1+|s|^2+|u|^2+|v|^2}{t}\r)\r).\notag
\end{align}
Thus, we have
\begin{align}
 J_+(s,u,v)+J_-(s,u,v)=&2\cos\left(\frac{\pi}2 (\beta+\delta+2s-u-v)\r)t^{-\beta-\delta-2s+u+v}\notag \\ &\times\Gamma(\beta+\delta+2s-u-v)\left(1+O\left(\frac{1+|s|^2+|u|^2+|v|^2}{t}\r)\r),\notag
\end{align}
where the contribution of the error $O\left(\frac{1+|s|^2+|u|^2+|v|^2}{t}\r)$ is less than the main term divided by $T$, due to the rapid decay of $G$ and $\widehat{f}$ in $s$, $u$, and $v$.
Solely for notational convenience, we define
\begin{align}
z_1=\beta+\delta+2s-u-v, \ \ \ \ z_2=\alpha-\beta+2u\ \ \ \ \text{and}\ \ \ \ z_3=\gamma-\delta+2v.\notag
\end{align}
Then, the main term of the sum in the brace of \eqref{eM1M} is equal to
\begin{align}
(t\Delta)^{-z_1}\Gamma(z_1)2\cos\left(\frac{\pi z_1}2\r) \sum_{h}\frac1{h^{z_1}}\sum_{\substack{m_1,n_1 \\ (m_1n_1,q)=1}} \frac{k^{1-z_1}} {m_1^{1+z_2}n_1^{1+z_3}}.\notag
\end{align}
Recalling that $\Delta=[d,(d_1,d_2)]$ and $k=(m_1,n_1)$, we may express the last sum over $m_1,n_1$ as an Euler product
\begin{align}
&\prod_{p\nmid q}\Bigg(\sum_{j=0}^\infty\frac{p^{j(1-z_1)}}{p^{j(2+z_2+z_3)}} \sum_{\substack{m,n\ge0 \\ \min\{m,n\}=0}}\frac1{p^{m(1+z_2)+n(1+z_3)}}\Bigg)\notag\\
=&\prod_{p\nmid q}\left(1-\frac1{p^{1+z_1+z_2+z_3}}\r)^{-1} \Bigg(\sum_{m,n\ge0}\frac1{p^{m(1+z_2)+n(1+z_3)}} -\sum_{m,n\ge1}\frac1{p^{m(1+z_2)+n(1+z_3)}}\Bigg)\notag\\
=&\prod_{p\nmid q}\left(1-\frac1{p^{1+z_1+z_2+z_3}}\r)^{-1} \left(1-\frac1{p^{1+z_2}}\r)^{-1} \left(1-\frac1{p^{1+z_3}}\r)^{-1} \left(1-\frac1{p^{2+z_2+z_3}}\r),\notag
\end{align}
which yields
\begin{align}
\sum_{\substack{m_1,n_1 \\ (m_1n_1,q)=1}} \frac{k^{1-z_1}} {m_1^{1+z_2}n_1^{1+z_3}}=\frac{\zeta_q(1+z_1+z_2+z_3)\zeta_q(1+z_2) \zeta_q(1+z_3)}{\zeta_q(2+z_2+z_3)}.\notag
\end{align}
Moreover, the functional equation of the Riemann zeta-function indicates that
\begin{align}
\Gamma(z_1)2\cos\left(\frac{\pi z_1}2\r) \sum_{h}\frac1{h^{z_1}}=(2\pi)^{z_1}\zeta(1-z_1).\notag
\end{align}
Thus, we conclude that
\begin{align}\label{eqM_1(dma)}
&\mathcal{M}_1(d,\ma)=\frac1{(2\pi i)^3}\sum_{d_1,d_2\mid q}\frac{\mu(d_1)\mu(d_2)}{[d_1,d_2]}\int_{(\ve)}\int_{(\ve)}\int_{(1)}\int_R \frac{G(s)}{s}g_{\alpha,\beta,\gamma,\delta}(s,t,\ma) \widehat{f}(u)\widehat{f}(v)\\
&\times\frac{\zeta(1-\beta-\delta-2s+u+v)\zeta_q(1+\alpha+\gamma+2s+u+v) \zeta_q(1+\alpha-\beta+2u)\zeta_q(1+\gamma-\delta+2v)} {\zeta_q(2+\alpha-\beta+\gamma-\delta+2u+2v)}\notag\\
&\times q^{2s}\left(\frac{2\pi}{t[d,(d_1,d_2)]}\r)^{\beta+\delta+2s-u-v} \Phi(t)dtdsdudv\left(1+O\left(\frac1T\r)\right).\notag
\end{align}

Now we come to deduce $A_{O,1}$ from $\mathcal{M}_1(d,\ma)$. We shift the integration in \eqref{eqM_1(dma)} over $u$ and $v$ towards $\re(u)=-1/4+\ve/2$ and $\re(v)=-1/4+\ve/2$. We collect poles from $u=0$ and $v=0$, and for the terms where only one of the two residues is taken we move the other integral to the $(-1/2+\ve)$-line. We do not cross poles at $u=-(\alpha-\beta)/2$ and $v=-(\gamma-\delta)/2$ since we ensured that $\widehat{f}(-(\alpha-\beta)/2)=\widehat{f}(-(\gamma-\delta)/2)=0$. For the integral along the new lines and the residues at only one of $u=0$ and $v=0$, we move the line of integration over $s$ to $\frac14$, and then a direct calculation with the estimate of $g_{\alpha,\beta,\gamma,\delta}(s,t,\ma)$ in \eqref{lemVg} shows that all these are bounded by
\begin{align}\label{eqaerror}
\ll T\sum_{d_1,d_{2}\mid q}\frac{1}{[d_1,d_2]}\left(\frac q{[d,(d_1,d_2)]}\r)^{\frac12}\left(T[d,(d_1,d_2)]\r)^{-\frac12+\ve}.
\end{align}
By summing over $d$, we find that its contribution to $A_{O,1}$ is bounded by
\begin{align}
&\ll T\frac1{\varphi^*(q)}\sum_{d\mid q}\vp(d)\sum_{d_1,d_2\mid q}\frac{1}{[d_1,d_2]}\left(\frac q{[d,(d_1,d_2)]}\r)^{\frac12}(T[d,(d_1,d_2)])^{-\frac12+\ve}\notag\\
&\ll T^{\frac12+\ve}q^{-\frac12+\ve},\notag
\end{align}
which is an acceptable error in the lemma.

For the residue at both $u=0$ and $v=0$, we move the line of the integral over $s$ to $\re(s)=\ve$. After eliminating $g_{\alpha,\beta,\gamma,\delta}(s,t,\ma)$ by \eqref{lemVg}, we observe that it is equal to
\begin{align}
\frac1{2\pi i}\sum_{d_1,d_2\mid q}&\frac{\mu(d_1)\mu(d_2)}{[d_1,d_2]}\int_R\Phi(t)\int_{(\ve)} \frac{G(s)}{s} \left(\frac{tq}{2\pi}\r)^{2s}\left(\frac{2\pi}{t[d,(d_1,d_2)]}\r)^{\beta+\delta+2s}\notag\\
&\times\frac{\zeta(1-\beta-\delta-2s)\zeta_q(1+\alpha+\gamma+2s) \zeta_q(1+\alpha-\beta)\zeta_q(1+\gamma-\delta)} {\zeta_q(2+\alpha-\beta+\gamma-\delta)} dsdt,\notag
\end{align}
adding an error
\begin{align}
\ll \sum_{d_1,d_2\mid q}\frac{1}{[d_1,d_2]}\left(\frac {Tq}{[d,(d_1,d_2)]}\r)^{2\ve}\notag
\end{align}
whose contribution to $A_{O,1}$ is bounded by $\ll  T^\ve q^\ve$ and is acceptable in the lemma.
An arrangement provides that the main contribution of the residue at both $u=0$ and $v=0$ to $A_{O,1}$ is
\begin{align}\label{eqmainAO1}
\frac{\zeta_q(1+\alpha-\beta)\zeta_q(1+\gamma-\delta)} {\zeta_q(2+\alpha-\beta+\gamma-\delta)} \frac1{2\pi i}\int_R \Phi(t)\left(\frac{t}{2\pi}\r)^{-\beta-\delta}\int_{(\ve)} \frac{G(s)}{s} \mathfrak{M}_{\alpha,\beta,\gamma,\delta}(s)dsdt,
\end{align}
where
\begin{align}\label{eqM}
\mathfrak{M}_{\alpha,\beta,\gamma,\delta}(s)=& \zeta(1-\beta-\delta-2s) \zeta_q(1+\alpha+\gamma+2s)\\
&\times\frac{q^{2s}}{\varphi^*(q)}\sum_{d\mid q}\vp(d)\mu\left(\frac qd\r)\sum_{d_1,d_2\mid q}\frac{\mu(d_1)\mu(d_2)}{[d_1,d_2][d,(d_1,d_2)]^{\beta+\delta+2s}}.\notag
\end{align}
We execute the sum in \eqref{eqM} by applying Lemma \ref{lemdivisorq}. It then follows that
\begin{align}
\mathfrak{M}_{\alpha,\beta,\gamma,\delta}(s)= \zeta_q(1-\beta-\delta-2s) \zeta_q(1+\alpha+\gamma+2s) q^{-\beta-\delta}.\notag
\end{align}
Inserting this into \eqref{eqmainAO1} provides the main term of $A_{O,1}$.
In conclusion, we have
\begin{align}
A_{O,1}=\mathcal{M}_{\alpha,\beta,\gamma,\delta}(\Phi) +O\left(T^{\frac34+\ve}(q/q_0^2)^{-\frac14+\ve}(T/T_0)^3 +T^\ve q^\ve\r).\notag
\end{align}
There are similar expressions for $A_{O,2}, A_{O,3}$, and $A_{O,4}$, and  applying these into  \eqref{eqAO1234} gives
\eqref{eAO} immediately. On the other hand, the proof of formula \eqref{eA-O} would be identical after applying Stirling's approximation \eqref{lemVX} to $X_{\alpha,\beta,\gamma,\delta}(q,t,\ma)$ at the beginning.
\end{proof}

\subsection{Assembling the main terms and proving the theorem}
In this section, we prove Theorem \ref{thmLT} by combining all main terms from off-diagonal terms and diagonal terms.
We first deduce the main term for the off-diagonal terms from the asymptotic formulae of $A_O$ and $A_{-O}$ stated in Lemma \ref{lemAOA-O}.
Making the change of variables $s\rightarrow -s$ in $\widetilde{\mathcal{M}}_{-\delta-\gamma-\beta-\alpha}(\Phi)$ and then combining it with $\mathcal{M}_{\alpha,\beta,\gamma,\delta}(\Phi)$, we have
\begin{align}
&\mathcal{M}_{\alpha,\beta,\gamma,\delta}(\Phi) +\widetilde{\mathcal{M}}_{-\delta-\gamma-\beta-\alpha}(\Phi) =Z_q(\alpha,-\delta,\gamma,-\beta,q) \int_R \Phi(t)\left(\frac{tq}{2\pi}\r)^{-\beta-\delta}dt+O\left(T^\ve q^\ve\r)\notag
\end{align}
by the residue theorem, where the poles of the Riemann zeta-function are canceled by $G(\frac{\alpha+\gamma}2)=0$, etc. After combining all the other terms of $A_{O}$ and $A_{-O}$ in the same way, we conclude that
\begin{align}\label{eqAO-O}
A_O&(\alpha,\beta,\gamma,\delta,\Phi) +A_{-O}(-\gamma,-\delta,-\alpha,-\beta,\Phi)\\ =&Z_{q}(\beta,-\gamma,\delta,-\alpha) \int_R \Phi(t)\left(\frac{tq}{2\pi}\r)^{-\alpha-\gamma}dt +Z_{q}(\alpha,-\gamma,\delta,-\beta) \int_R \Phi(t)\left(\frac{tq}{2\pi}\r)^{-\beta-\gamma}dt\notag\\ &+Z_{q}(\beta,-\delta,\gamma,-\alpha) \int_R \Phi(t)\left(\frac{tq}{2\pi}\r)^{-\alpha-\delta}dt +Z_{q}(\alpha,-\delta,\gamma,-\beta) \int_R \Phi(t)\left(\frac{tq}{2\pi}\r)^{-\beta-\delta}dt\notag\\
&+O\left(T^{\frac34+\ve}(q/q_0^2)^{-\frac14+\ve}(T/T_0)^3 +T^\ve q^\ve\r).\notag
\end{align}

We sum up from \eqref{eMD}, \eqref{eA1D} and \eqref{eA-1D} that
\begin{align}
M(\alpha,\beta,\gamma,\delta,\Phi)=&A_D(\alpha,\beta,\gamma,\delta,\Phi) +A_{-D}(-\gamma,-\delta,-\alpha,-\beta,\Phi)\notag\\
&+A_O(\alpha,\beta,\gamma,\delta,\Phi) +A_{-O}(-\gamma,-\delta,-\alpha,-\beta,\Phi)+O((T/T_0)T^\ve q^\ve).\notag
\end{align}
Together with Lemma \ref{lemD} and \eqref{eqAO-O}, this would establish Theorem \ref{thmLT}.

\section{Proof of Theorem \ref{thmLq}}\label{secthmlq}
In this section, we sketch the proof of Theorem \ref{thmLq}. We follow closely the argument in \cite[Theorem 1.3]{Wu20} and keep track of the difference. The calculation of the main terms for Theorem \ref{thmLq} is identical to \cite[Theorem 1.3]{Wu20} since $t$ does not cause any essential difference here. We bound the quantities $E_{M,N}$ and $E_{\ol{M,N}}$ in \cite[Theorem 3.1]{Wu20} by
\begin{align}\label{boundEMN}
E_{M,N},~E_{\ol{M,N}}\ll T_1^{2+\ve} q^{-\frac12+\theta+\ve} M^{-\frac12} N^{\frac12},
\end{align}
where the extra factor $T_1^{2+\ve}$ comes from the ratios of gamma factors in applying spectral large sieve inequalities. The proof of \eqref{boundEMN} is identical to \cite[Section 9]{Wu20}, and the necessary variation on the ratios of gamma factors is an exercise based on Stirling's approximation.

Now, it remains to bound the quantity $B_{M,N}$ in \cite[(3.4)]{Wu20} with $M$ and $N$ far away from each other. After omitting all harmless parameters such as $\alpha,~\beta,~\gamma,~\delta$, and $\ma$, we recall that
\begin{align}
B_{M,N}=\frac1{\vp^*(q)}\sum_{d\mid q}\vp(d)\mu\left(\frac qd\r) \sum_{\substack{(mn,q)=1, m<n\\ m\equiv  n(\bmod d)}}\frac{d(m) d(n)} {m^{\frac12+it}n^{\frac12-it}}
V\left(\frac{mn}{q^2},t\r)W\left(\frac mM\r) W\left(\frac nN\r).\notag
\end{align}
It is easy to see the trivial bound
\begin{align}\label{tboundBMN}
B_{M,N}\ll q^{-1+\ve}(MN)^{\frac12}.
\end{align}
Together with $MN\ll (T_1q)^{2+\ve}$, this means that Theorem \ref{thmLq} is non-trivial only for $T\ll q^{\frac18-\frac34\theta}$.

We write $T_1=q^{\tau}$ with $0\le\tau\le \frac18-\frac34\theta$. Let
\[
\eta=\tfrac1{14}-\tfrac37\theta-\tfrac{11}7\tau,\ \ \ \ \ \ M=q^{\mu},\ \ \ \ \ N=q^{\nu}.
\]
By \eqref{boundEMN} and  \eqref{tboundBMN}, it remains to show
\[
B_{M,N}\ll q^{-\eta+\ve}
\]
for
\begin{align}\label{range}
2-2\eta\le\mu+\nu\le 2+2\tau,\ \ \ \ \ 1-2\theta-2\eta-4\tau\le \nu-\mu.
\end{align}
With
\[
W_t(x)=x^{it}W(x),\ \ \ N\asymp N_1N_2,\ \ \ N_1=q^{\nu_1},\ \ \ N_2=q^{\nu_2}, \ \ \ \nu_1\le\nu_2,
\]
 an evaluation identical to \cite[Section 10]{Wu20} reduces the problem to bounding
\begin{align}\label{R}
R(d,a)\ll q^{-\eta+\ve},
\end{align}
where
\begin{align}\label{eqR}
R(d,a)=\frac{N_2}{a\vp^*(q)\sqrt{MN}}\sum_{(m,q)=1}d(m)W_t\left(\frac mM\r)\sum_{(n_1,q)=1}\sum_{h\neq0} e\left(\frac{hm\ol{a}\ol{n}_1}{d}\r) W_t\left(\frac {n_1}{N_1}\r)\wh{W_t}\left(\frac {h}{H}\r)
\end{align}
is an analogue of $R(d,a)$ in \cite[(10.3)]{Wu20}. The difference is the $W_t$ function in place of the $W$ function, as a result, a longer range of the $h$-sum with
\[
H=adT_1N_2^{-1}\ll T_1qN_2^{-1}.
\]
Since the $h$-sum vanishes for $N_2>T_1q$, we may assume that
\[
\nu_2\le 1+\tau.
\]

Now we divide the region in \eqref{range} into several parts, according to
\begin{enumerate}
  \item $\nu-\mu\ge 1+2\eta+4\tau$;
  \item $1-2\theta-2\eta-4\tau\le\nu-\mu<1+2\eta+4\tau$,
  \begin{itemize}
           \item $\frac12-\theta-2\eta-3\tau< \nu_1< \frac12+2\eta+\tau$;
           \item $\nu_1\ge \frac12+2\eta+\tau$.
         \end{itemize}
\end{enumerate}
Then for each range, we prove that the estimate \eqref{R} holds.
\subsection{The range with $\nu-\mu$ large}
For the range with $\nu-\mu\ge 1+2\eta+4\tau$, a summation by parts with the Weil bound shows
\[
R(d,a)\ll \frac{N_2HT_1}{aq^{1+\ve}}\left(\frac NM\r)^{-\frac12}\left(d^{\frac12+\ve}+N_1d^{-1}\r).
\]
With
$H=adT_1N_2^{-1},\ \ d\le q$, and $N_1\ll N_2\ll q^{1+\tau}$, it follows that
\[
R(d,a)\ll q^{\frac12+2\tau+\ve}\left(\frac NM\r)^{-\frac12}+q^{3\tau}\left(\frac NM\r)^{-\frac12}\ll q^{-\eta+\ve}.
\]

\subsection{The range with $\nu-\mu$ close to 1}
After combining $m$ and $h$ into a longer variable $l=mh$, we have
\[
R(d,a)\ll\frac{N_2q^\ve}{aq\sqrt{MN}}\sum_{l\le L}\left|\sum_{(n_1,q)=1} e\left(\frac{\ol{a}\ol{n}_1l}{d}\r) W_t\left(\frac {n_1}{N_1}\r)\r|
\]
with
\[
L=MHq^\ve\ll \frac{adT_1M}{N_2}q^\ve.
\]

We bound this double sums with the following lemma; see also \cite[Lemma 10.1]{Wu20} and \cite[Theorem 2.4]{KSWX22}.
\begin{lemma}\label{lemDS}
Let $q$ be a positive integer and $(\alpha_k)$ be a sequence of complex numbers satisfying $\alpha_k\ll k^\ve$. For any positive integers $L,K$, we have
\begin{align}
\sum_{l\le L}\left|\sum_{\substack{k\le K\\ (k,q)=1}} \alpha_k e\left(\frac{al\ol{k}}{q}\r) \r|\ll LKq^\ve\cdot\Delta(L,K,q)\notag
\end{align}
uniformly in $a$ with $(a,q)=1$, where we may take the saving $\Delta(L,K,q)$ freely among
\begin{subequations}\begin{align}
&L^{-\frac{1}2}K^{-\frac{1}4}q^{\frac{1}4}+L^{-\frac{1}2}+q^{-\frac{1}2}+K^{-\frac{1}2},\label{eqDS3}\\
&L^{-\frac12}K^{-1}q^{\frac34}+K^{-1}q^{\frac14}+L^{-\frac{1}2}+q^{-\frac12}+K^{-\frac{1}2}\label{eqDS4}.
\end{align}
\end{subequations}
\end{lemma}

For the range with $1-2\theta-2\eta-4\tau\le\nu-\mu<1+2\eta+4\tau$ and $\frac12-\theta-2\eta-3\tau< \nu_1< \frac12+2\eta+\tau$, we apply \eqref{eqDS3} to have
\[
R(d,a)\ll\frac{N_2q^\ve}{aq\sqrt{MN}}\left(L^{\frac12}N_1^{\frac34}d^{\frac14}+L^{\frac12}N_1+L N_1 d^{-\frac12}+L N_1^{\frac12}\r).
\]
As $L\ll \frac{adT_1M}{N_2}q^\ve$ and $N_1\ll q^{\frac12+2\eta+\tau}=q^{\frac9{14}-\frac67\theta-\frac{15}{7}\tau}\le q$, an easy calculation shows
\[
R(d,a)\ll T_1^{\frac12}q^{-\frac14+\ve}N_1^{\frac14}+T_1 q^\ve\left(\frac NM\r)^{-\frac12}N_1^{\frac12}\ll  q^{-\frac14+\frac12\tau+\frac14\nu_1+\ve}+q^{\tau-\frac12(\nu-\mu)+\frac12\nu_1+\ve}.
\]
Since $\nu-\mu>1-2\theta-2\eta-4\tau$ and $\nu_1< \frac12+2\eta+\tau$, we have
\begin{align*}
&-\tfrac14+\tfrac12\tau+\tfrac14\nu_1\le-\tfrac18+\tfrac12\eta+\tfrac34\tau\le-\eta \ \ \ \text{for}\ \ \ \eta\le\tfrac1{12}-\tfrac12\tau,\\
&\tau-\tfrac12(\nu-\mu)+\tfrac12\nu_1\le -\tfrac14+\theta+2\eta+\tfrac72\tau\le -\eta \ \ \ \text{for}\ \ \ \eta\le\tfrac1{12}-\tfrac13\theta-\tfrac76\eta,
\end{align*}
and thus \eqref{R} holds.

For the remaining range, we have
\begin{align}\label{case21}
&1-2\theta-2\eta-4\tau\le\nu-\mu<1+2\eta+4\tau,\\
\label{case22}
&\tfrac12+2\eta+\tau\le \nu_1\le\tfrac12+\tfrac12\tau+\tfrac14(\nu-\mu)\le\tfrac34+\tfrac12\eta+\tfrac32\tau,
\end{align}
then by \eqref{eqDS4},
\[
R(d,a)\ll\frac{N_2q^\ve}{aq\sqrt{MN}}\left(L^{\frac12}d^{\frac34}+Ld^{\frac14}+L^{\frac12}N_1+LN_1d^{-\frac12}+LN_1^{\frac12}\r).
\]
After a simple calculation with $L\ll \frac{adT_1M}{N_2}q^\ve$ and $N_1\ll q^{\frac34+\frac12\eta+\frac32\tau}=q^{\frac{11}{14}-\frac3{14}\theta+\frac57\tau}\le q$, it follows that
\begin{align*}
R(d,a)&\ll T_1^{\frac12}q^{\frac14+\ve}N_1^{-\frac12}+T_1^{\frac12}q^{-\frac12+\ve}N_1^{\frac12}+T_1q^\ve\left(\frac NM\r)^{-\frac12}N_1^{\frac12}\\
&\ll q^{\frac14+\frac12\tau-\frac12\nu_1+\ve}+q^{-\frac12+\frac12\tau+\frac12\nu_1+\ve}+q^{\tau-\frac12(\nu-\mu)+\frac12\nu_1+\ve}.
\end{align*}
Then by \eqref{case21} and \eqref{case22}, we have
\begin{align*}
&\tfrac14+\tfrac12\tau-\tfrac12\nu_1\le\tfrac14+\tfrac12\tau-\tfrac12\times(\tfrac12+2\eta+\tau)=-\tau,\\
&-\tfrac12+\tfrac12\tau+\tfrac12\nu_1\le-\tfrac18+\tfrac14\eta+\tfrac54\tau\le \tau\ \ \ \text{for}\ \ \ \eta\le\tfrac1{10}-\tau,\\
&\tau-\tfrac12(\nu-\mu)+\tfrac12\nu_1\le \tfrac14+\tfrac54\tau-\tfrac38(\nu-\mu)\le-\tfrac18+\tfrac34\theta+\tfrac34\eta+\tfrac{11}4\tau\le -\eta \ \ \text{for}\ \eta\le\tfrac1{14}-\tfrac37\theta-\tfrac{11}7\tau,
\end{align*}
and then \eqref{R} follows.

\section{ Acknowledgments}
This work is supported in part by the National Natural Science Foundation of China (Grant nos. 12271135, 11871187) and the Fundamental Research Funds for the Central Universities of China.
%We would like to express our heartfelt thanks to the anonymous referees for their careful reading and helpful suggestion.

\end{document}